\title{Robust and conservative dynamical low-rank methods for the Vlasov equation via a novel macro-micro decomposition}
\author{Jack Coughlin\footnote{Department of Applied Mathematics, University of Washington, Seattle, WA 98195, USA (johnbc@uw.edu).} \ \ , \ Jingwei Hu\footnote{Department of Applied Mathematics, University of Washington, Seattle, WA 98195, USA (hujw@uw.edu).} \ \ , \ Uri Shumlak\footnote{Aerospace and Energetics Research Program, University of Washington, Seattle, WA 98195, USA (shumlak@uw.edu).}}
\numberwithin{equation}{section}
\newtheorem{thm}{Theorem}[section]
\newtheorem{prop}{Proposition}[section]
\newtheorem{corollary}{Corollary}[section]
\theoremstyle{definition}
\newtheorem{remark}{Remark}
\newcommand{\Dx}{{\partial_{x}}}
\newcommand{\Dv}{{\partial_{v}}}
\newcommand{\qm}{}
\newcommand{\Lwinv}{{w^{-1}(v)}}
\newcommand{\xLwinv}{{x, w^{-1}(v)}}
\newcommand{\ppp}{{P_\Phi^\perp}}
\newcommand{\Ampere}{Amp\`{e}re}
\begin{document}

\maketitle

\begin{abstract}
Dynamical low-rank (DLR) approximation has gained interest in recent years as a viable solution to
the curse of dimensionality in the numerical solution of kinetic equations including the Boltzmann
and Vlasov equations. These methods include the projector-splitting and Basis-update \& Galerkin (BUG) DLR
integrators, and have shown promise at greatly improving the computational efficiency of kinetic
solutions. However, this often comes at the cost of conservation of charge, current and energy. 
In this work we show how a novel macro-micro decomposition may be used to separate the distribution
function into two components, one of which carries the conserved quantities, and the other of which
is orthogonal to them.
We apply DLR approximation to the latter, and thereby achieve a clean and extensible approach to a
conservative DLR scheme which retains the computational advantages of the base scheme.
Moreover, our approach requires no change to the mechanics of the DLR approximation, so it
is compatible with both the BUG family of integrators and the projector-splitting integrator which we use here.
We describe a first-order integrator which can exactly conserve charge and either current or energy,
as well as an integrator which exactly conserves charge and energy and exhibits second-order accuracy on our test problems.
To highlight the flexibility of the proposed macro-micro decomposition, we implement a pair of velocity
space discretizations, and verify the claimed accuracy and conservation properties on a suite of plasma
benchmark problems.
\end{abstract}

{\small 
{\bf Key words.} dynamical low-rank integrator, projector-splitting, Vlasov-Fokker-Planck model, Dougherty, Lenard-Bernstein, conservation, spectral methods 

{\bf AMS subject classifications.} 35Q61, 35Q83, 65M06, 65M70
}

\section{Introduction}

Kinetic equations describe the behavior of rarefied gases and plasmas at low to moderate levels of collisionality.
Plasma physics in particular is dominated by the study of low collisionality regimes, where the full six-dimensional kinetic physics plays a role in the development of micro-instabilities and associated turbulence, anomalous transport, and wave propagation.
Unfortunately, the numerical solution of kinetic equations is extremely costly due to their high dimensionality:
memory and computational requirements both scale as $\mathcal{O}(N^6)$ for a discretization with $N$ degrees of freedom in each dimension.
This obstacle is commonly known as the curse of dimensionality.

Particle-in-cell methods \cite{birdsallPlasmaPhysicsComputer1985} sidestep the curse of dimensionality by taking a Lagrangian, particle-based approach to modeling phase space,
although these suffer from statistical noise.
More recently, dimension-reduction techniques based on low-rank approximation to the solution have seen success in solving kinetic equations.
Such approaches propose to model the particle distribution function $f$ as a low-rank combination of lower-dimensional quantities,
thereby greatly reducing computational and memory requirements.
These low-rank methods include the ``step-truncation'' approach \cite{guoLowRankTensor2022} and dynamical low-rank (DLR) approximation \cite{kochDynamicalLowRank2007}.
Dynamical low-rank approximation has been successfully applied to kinetic models for 
neutral gas dynamics \cite{huAdaptiveDynamicalLow2022, einkemmerEfficientDynamicalLowRank2021},
radiative transport \cite{einkemmerAsymptoticpreservingDynamicalLowrank2021, pengLowrankMethodTwodimensional2020},
and plasma physics problems \cite{einkemmerLowRankProjectorSplittingIntegrator2018, einkemmerAcceleratingSimulationKinetic2023, coughlinEfficientDynamicalLowrank2022}.

An obstacle to usefully applying DLR approximation to kinetic equations is the preservation of the equation's conservation properties.
The truncation implied by the low-rank approximation does not necessarily respect the conservation of the physical observables of
mass (often called charge density in the plasma setting), momentum (current), and total energy.
This conservation failure is inherent to the low-rank approximation and occurs independently of the physical or time discretization chosen.

In \cite{einkemmerQuasiConservativeDynamicalLowRank2019}, it was shown how to use a method based on Lagrange multipliers to obtain a
quasi-conservative scheme for the Vlasov equation of plasma dynamics in the DLR framework.
A pair of papers \cite{einkemmerMassMomentumEnergy2021, einkemmerRobustConservativeDynamical2023} demonstrated a first-principles
way of achieving conservation in the DLR framework, by forcing the velocity basis to contain the functions
$(1, v, |v|^2/2)$ via a modification to the DLR Galerkin condition.
The second of these works \cite{einkemmerRobustConservativeDynamical2023} showed how to achieve this in the context of the 
Basis-update \& Galerkin (BUG) integrator \cite{cerutiUnconventionalRobustIntegrator2022}, which is robust to the presence of small singular values.
Lately it has been observed that the rank-adaptive BUG integrator is also conservative if equipped
with a conservative rank-truncation algorithm \cite{einkemmerConservationPropertiesAugmented2023}.

In \cite{pengHighorderLoworderHOLO2021} the DLR method was used to evolve the high-order component of a high-order/low-order (HOLO) scheme for the radiative transport equation.
The DLR method was employed as a moment closure method for a low-order fluid system, while a least-squares projection was applied to keep its conserved
moments close to those of the fluid system: in this way a conservative DLR method for the radiative transport equation was achieved.
Finally, under the step truncation family of methods, conservative projections have been used to obtain overall conservative low-rank solutions to the 
Vlasov equation \cite{guoLocalMacroscopicConservative2023, guoConservativeLowRank2022, guoLocalMacroscopicConservative2022}.

Our method is closest in spirit to \cite{koellermeierMacromicroDecompositionConsistent2023}, which shows how to use a modal Legendre discretization
of a kinetic extension of the shallow-water equations to combine conservation with the BUG integrator.
In that work, the authors apply a low-rank ansatz to the trailing ``microscopic'' modes, while evolving the leading modes using standard conservative techniques.
However, as formulated, the method is limited to modal discretizations of the phase space coordinate.

In this paper we show a new way of obtaining a conservative DLR method for kinetic equations, with a focus on the Vlasov-Fokker-Planck equation.
Our method is based on a novel macro-micro decomposition that operates at the equation level and is designed to be amenable to low-rank approximation.
The macro part of the decomposition may be solved using standard conservative discretization techniques, while we apply a DLR approximation
to the micro part.
The benefits of our formulation include that it is compatible with various DLR integrators, including the projector-splitting integrator
which can be formally extended to second-order accuracy via standard Strang splitting method,
and does not require any rank augmentation at intermediate steps.
Moreover, our macro-micro decomposition and subsequent DLR approximation are independent of the velocity space discretization, which may be chosen last.
This represents a substantial benefit for plasma applications, where shock-capturing discretizations in velocity such as Discontinuous Galerkin
are very popular for their ability to resolve fine phase space structures \cite{hoPhysicsBasedAdaptivePlasmaModel2018, hakimAliasFreeMatrixFreeQuadratureFree2020}.

The rest of this paper is organized as follows.
In the remainder of this section we introduce the model equation of plasma physics we will discretize, namely the Vlasov equation with Dougherty-Fokker-Planck collision operator.
In Section \ref{sec:macro-micro} we describe our novel macro-micro decomposition and derive the equations of evolution for each part.
In Section \ref{sec:time-discretization} we describe the time discretization of our macro-micro equations in the DLR framework and
prove the claimed conservation properties.
Sections \ref{sec:spatial-discretization} and \ref{sec:velocity-discretization} describe our chosen discretizations of physical and velocity space, respectively.
Finally, we present numerical results on standard benchmark problems for the Vlasov equation in Section \ref{sec:numerical_results}. 
The paper is concluded in Section \ref{sec:conclusion}.

\subsection{Properties of the Vlasov equation with Dougherty collisions}
In this work, we consider a kinetic equation for a single-species plasma with collisions, 
known as the electrostatic Vlasov equation with the Dougherty-Fokker-Planck or Lenard-Bernstein collision operator \cite{doughertyModelFokkerPlanckEquation1964}.
In physical terms, this equation describes the motion of an electron species against a static ion background, neglecting the influence of magnetic fields.
In dimensionless form, the electrostatic Vlasov equation for a single species in $d$ physical and velocity space dimensions is
\begin{align}
    \label{eqn:vlasov}
\partial_t f + v \cdot \nabla_x f + E \cdot \nabla_v f = Q(f), \quad t > 0, \quad x \in \Omega_x \subset \mathbb{R}^d, \quad v \in \mathbb{R}^d.
\end{align}
Note that our non-dimensionalization also reverses the charge convention, so that the dynamic electron species is given a positive unit charge.
The function $f(x, v, t)$ is the normalized probability density function, representing the density of particles with
velocity $v$, at position $x$ and time $t$.
The evolution of the electric field $E(x, t)$ is coupled to the charge density $\rho$ by either Gauss's law,
\begin{align}
    \label{eqn:gauss}
    \nabla_x \cdot E(x, t) = \rho(x, t) - \rho_0, \quad \rho(x, t) = \int_{\mathbb{R}^d} f\,\mathrm{d} v,
\end{align}
with $\rho_0$ a uniform background density satisfying $\int_{\Omega_x} \rho(x,t) - \rho_0\,\mathrm{d} x = 0$,
or by \Ampere's law:
\begin{align}
    \label{eqn:ampere}
    \partial_t E(x, t) = -J(x, t), \quad J(x, t) = \int_{\mathbb{R}^d} v f\,\mathrm{d} v.
\end{align}
The initial electric field is chosen to satisfy Gauss's law, and it is easy to show that at the continuous level,
if Gauss's law is satisfied at \( t=0 \) and \( E \) evolves according to \eqref{eqn:ampere}, then Gauss's law is satisfied for all time.
However, if one is not careful, numerical discretization errors can lead to violations of \eqref{eqn:gauss} at the discrete level.
These so-called divergence errors can lead to unphysical solutions to certain problems.
However, Ampere's law has the advantage that it keeps the overall hyperbolic nature of the system of equations, and ``divergence cleaning'' 
strategies have been developed \cite{munzThreedimensionalFinitevolumeSolver2000} to ameliorate the issue of numerical divergence errors.

The Dougherty-Fokker-Planck collision operator for one species, $Q(f)$, is defined as
\begin{align*}
    Q(f) = \nu \nabla_v \cdot (T \nabla_v f + (v - u) f).
\end{align*}
Here $\nu$ is a dimensionless collision frequency and $T(x, t)$ and $u(x, t)$ are the local temperature and fluid velocity, defined via moments of $f$:
\begin{align}
    \quad u(x, t) = \frac{1}{\rho} \int_{\mathbb{R}^d} vf\,\mathrm{d} v, 
    \quad T(x, t) = \frac{1}{d\rho} \int_{\mathbb{R}^d} |v - u|^2 f \, \mathrm{d} v.
\end{align}
It is not hard to show \cite{doughertyModelFokkerPlanckEquation1964} that $Q(f)$ satisfies the identity
\begin{align*}
    \int_{\mathbb{R}^d} \bm{\phi}(v) Q(f)\,\mathrm{d} v = \mathbf{0},
\end{align*}
where $\bm{\phi}(v) = (1, v, |v|^2/2)^T$ is the vector of so-called collision invariants.
These correspond respectively to conservation of mass, momentum, and energy in elastic interparticle collisions.
Each collision invariant therefore admits a local conservation law, which we now derive.
Define the current density $J$, kinetic energy density $\kappa$, and the (total) energy density $e$ as
\begin{align}
    J(x, t) &= \int_{\mathbb{R}^d} vf\,\mathrm{d} v = \rho u, \\
    \kappa(x, t) &= \int_{\mathbb{R}^d} \frac{|v|^2}{2} f\,\mathrm{d} v,\\
    e(x, t) &= \kappa + \frac{|E|^2}{2}.
\end{align}
We note that what we have called the charge and current densities, $\rho$ and $J$, are in fact identical to
the mass density and momentum for the dimensionless, single-species Vlasov equation we consider here.
In the case of multiple species it is total momentum, not current, which is conserved.
Here however, to avoid naming the same quantity two different ways, we refer to conservation of current.\footnote{The alternative is to refer to the source term in \Ampere's law as the momentum.}
Now, taking the moments of \eqref{eqn:vlasov} weighted by each component of $\bm{\phi}(v)$ gives
a system of local conservation laws for $(\rho, J, e)^T$:
\begin{align}
    \label{eqn:local_conservation_law-1}
    \partial_t \rho + \nabla_x \cdot J &= 0, \\
    \label{eqn:local_conservation_law-2}
    \partial_t J + \nabla_x \cdot \mathbf{\sigma} &= \rho E, \\
    \label{eqn:local_conservation_law-3}
    \partial_t e + \nabla_x \cdot \mathbf{q} &= 0,
\end{align}
where $\mathbf{\sigma} = \int_{\mathbb{R}^d} (v \otimes v) f \,\mathrm{d} v$ and $\mathbf{q} = \frac{1}{2} \int_{\mathbb{R}^d} v |v|^2\,\mathrm{d} v$.
An important goal of numerical discretizations of the Vlasov equation is the preservation of 
the local conservation laws \eqref{eqn:local_conservation_law-1}-\eqref{eqn:local_conservation_law-3}.
Failure to respect charge or energy conservation can lead to numerical instabilities and nonphysical solutions.
More fundamentally, it is precisely these ``observables'' which are often of greatest interest to the practitioner,
since the purpose of numerical solutions to kinetic equations is often to shed light on the transport and partition
of density and energy.

If we take the spatial domain $\Omega_x$ to be periodic and integrate in $x$, then the divergence terms 
in \eqref{eqn:local_conservation_law-1}-\eqref{eqn:local_conservation_law-3} vanish, and we obtain a set of
global conservation laws for charge, current and energy:
\begin{align}
\label{eqn:global_conservation_law-1}
    \partial_t \int_{\Omega_x} \rho \,\mathrm{d}x &= 0, \\
    \partial_t \int_{\Omega_x} e \, \mathrm{d}x &= 0,
\end{align}
and
\begin{align}
\label{eqn:global_conservation_law-2}
    \partial_t \int_{\Omega_x} J \,\mathrm{d}x &= \int_{\Omega_x} (\nabla_x \cdot E + \rho_0) E \, \mathrm{d}x = \int_{\Omega_x} \frac{1}{2} \nabla_x |E|^2 + \rho_0 (\nabla_x \varphi)\,\mathrm{d} x = 0,
\end{align}
where we have used the fact that $E = -\nabla_x \varphi$ for some potential function $\varphi$.


\section{A novel macro-micro-decomposition of the Vlasov equation}
\label{sec:macro-micro}
In this section we describe the novel macro-micro decomposition at the core of our method.
By macro-micro decompositions, we refer to a family of methods which use a decomposition of the form
\begin{align*}
f(x, v, t) = \mathcal{N}(x, v, t) + g(x, v, t),
\end{align*}
in which $\mathcal{N}$ is designed to share its first $d+2$ moments with $f$, and those same moments of $g$ vanish identically.
The strategy is then to evolve $\mathcal{N}$ as accurately as possible using standard conservative discretizations.
On the other hand, the fact that $g$ does not contribute to the conserved quantities lets us evolve it at the precision
dictated by the kinetic physics of the problem.
Thus, the macro-micro decomposition splits a high-dimensional problem into two parts: a lower dimensional problem
for $\mathcal{N}$ which can be solved conservatively, and a high-dimensional problem for $g$ to which we can apply either
a coarser discretization or more sophisticated dimension reduction techniques.

Perhaps the most natural and widely-known macro-micro decomposition for a collisional kinetic equation is given in \cite{bennouneUniformlyStableNumerical2008}.
This approach takes the equilibrium distribution function, the Maxwellian $\mathcal{M}$, as the ``macro'' component:
\begin{align*}
    \mathcal{M}(x, v, t) = \frac{\rho(x, t)}{\left( 2\pi T(x, t)\right)^{d/2}} e^{-\frac{|v - u(x, t)|^2}{2T(x, t)}}.
\end{align*}
For strongly collisional problems, such a decomposition can expect the remainder $g$ to be small compared to the Maxwellian.
However, this decomposition is less appealing when collisions are weaker.
Moreover, it is not favored by the DLR method that we use here.
A key step in DLR is the projection of the right-hand side onto the tangent space of the low-rank approximate solution manifold \cite{kochDynamicalLowRank2007}.
The problem is that for a Maxwellian-based macro-micro decomposition, computing the necessary projection requires evaluating
integrals such as
\begin{align}
    \label{eqn:maxwellian-weighted-integral}
    \left( \frac{\rho(x, t)}{2\pi T(x, t)} \right)^{d/2} \int_{\mathbb{R}^d} V_j(v) e^{-\frac{|v - u(x, t)|^2}{2T(x, t)}} \, \mathrm{d} v,
\end{align}
which require $O(N^{2d})$ operations in the general case, where $N$ is the number of degrees of freedom in each spatial and velocity dimension.
References \cite{einkemmerEfficientDynamicalLowRank2021, coughlinEfficientDynamicalLowrank2022} present DLR methods for
highly collisional regimes based on an expansion around the Maxwellian.
Both of these papers deal with isothermal flow, in which case the Maxwellian-weighted integrals can be efficiently evaluated
by exploiting convolutional structure.

Our proposed method is aimed at plasma applications at the electron scale, where collisions are typically much weaker than in neutral gases,
and phase space often exhibits highly non-equilibrium features.
As such, we do not expect a Maxwellian-based macro-micro decomposition such as \cite{bennouneUniformlyStableNumerical2008},
nor a Maxwellian-centered DLR scheme such as \cite{einkemmerEfficientDynamicalLowRank2021, coughlinEfficientDynamicalLowrank2022},
to be advantageous.
Rather, to avoid the difficulties presented by integrals such as \eqref{eqn:maxwellian-weighted-integral}, we derive
a macro component $\mathcal{N}$ with fixed rank of $d+2$, which allows for efficient computation of the projections
required by DLR approximation.

To illustrate the ideas, we restrict our discussion to the ``1D1V'' case of $d=1$.
At this point we also modify the initial-boundary value problem for the Vlasov equation by possibly truncating velocity space.
We let $v \in \Omega_v$, where $\Omega_v \subseteq \mathbb{R}$ may be either bounded or equal to $\mathbb{R}$.
Our one-dimensional, truncated Vlasov equation is therefore
\begin{align}
    \label{eqn:vlasov-1d1v}
\partial_t f + v \partial_x f + E \partial_v f = Q(f), \quad t > 0, \quad x \in \Omega_x \subset \mathbb{R}, \quad v \in \Omega_v \subseteq \mathbb{R}.
\end{align}
Denote the $(x, v)$ domain by $\Omega = \Omega_x \times \Omega_v$.
We impose periodic boundary conditions in $x$.
For an unbounded velocity domain, no boundary conditions in $v$ are required, although we must assume
that $f$ decays sufficiently quickly as $v \rightarrow \pm\infty$.
On a bounded velocity domain we make the same assumption of rapid decay, so that $f$ and its
derivatives are negligible at the velocity boundary.
The fluid variables must be redefined in terms of moments of $f$ over $\Omega_v$, so we will write
\begin{align}
    \label{eqn:rhoJe-def}
\rho = \langle f \rangle_v, \quad J = \left\langle vf \right\rangle_v, \quad \kappa = \left\langle \frac{|v|^2}{2} f \right\rangle_v, \quad e = \kappa + \frac{|E|^2}{2},
\end{align}
\begin{align}
    \label{eqn:uT-def}
\quad u = \frac{J}{\rho}, \quad T = \frac{1}{\rho} \left\langle |v - u|^2 f \right\rangle_v,
\end{align}
where $\left\langle \cdot \right\rangle_v = \int_{\Omega_v} \cdot \, \mathrm{d} v$.
With these definitions we can again show that 
\begin{align}
    \label{eqn:Q-conserves}
\left\langle \bm{\phi}(v) Q(f) \right\rangle_v = \mathbf{0},
\end{align}
using the fast decay of $f$ and its derivatives at the velocity boundary.
Therefore, the derivation of the local conservation laws \eqref{eqn:local_conservation_law-1}-\eqref{eqn:local_conservation_law-3} holds, as well as the global conservation laws \eqref{eqn:global_conservation_law-1}-\eqref{eqn:global_conservation_law-2}.

Our macro-micro decomposition is based on orthogonal projection in an inner product space over $\Omega_v$.
To work in an inner product space over the possibly unbounded domain \( \Omega_v \), we require a weight function which we denote \( w(v): \Omega_v \mapsto \mathbb{R} \).
This induces a pair of weighted inner products on \( \Omega_v \) and \( \Omega \), respectively:
\begin{align*}
\left\langle g, h \right\rangle_\Lwinv = \int_{\Omega_v} w^{-1}(v) g(v) h(v)\,\mathrm{d} v, \qquad
\left\langle g, h \right\rangle_\xLwinv = \int_{\Omega} w^{-1}(v) g(x, v) h(x, v)\, \mathrm{d} x \mathrm{d} v.
\end{align*}
We denote the corresponding inner product spaces by \( L^2(\Omega_v, w^{-1}) \) and \( L^2(\Omega, w^{-1}) \) respectively.
From standard theory \cite{gautschiOrthogonalPolynomialsComputation2004}, the inner product \( \left\langle \cdot, \cdot \right\rangle_\Lwinv \)
has an associated family of orthonormal polynomials, which we will denote by \( p_n(v) \).
Examples of classical orthonormal polynomial families include the Hermite polynomials
with weight function \( \frac{1}{\sqrt{2\pi} v_0} e^{-(v/v_0)^2/2} \) on the whole real line, and the scaled Legendre polynomials,
which have the constant weight function \( w(v) = \frac{1}{v_{max}} \) on the domain \( [-v_{max}, v_{max}] \).

All families of orthonormal polynomials satisfy an orthogonality relation
\begin{align}
    \label{eqn:p-ortho}
\int_{\Omega_v} w(v) p_n(v) p_m(v)\,\mathrm{d} v = \left\langle w(v) p_n(v), w(v) p_m(v) \right\rangle_\Lwinv = \delta_{nm},
\end{align}
and a symmetric three-term recurrence relation
\begin{align}
    \label{eqn:p-recurrence}
v p_n(v) = a_n p_{n+1}(v) + b_n p_n(v) + a_{n-1} p_{n-1}(v).
\end{align}
Also define the coefficients \( d_{10}, d_{20}, d_{21} \) by 
\begin{align}
\label{eqn:p-derivatives}
p'_1(v) = d_{10} p_0(v), \quad \text{and} \quad p'_2(v) = d_{20} p_0(v) + d_{21} p_1(v).
\end{align}

The first three orthonormal polynomials \( \mathbf{p}(v) = (p_0(v), p_1(v), p_2(v))^T \) are related to the collision
invariants \( \bm{\phi}(v) \) by a lower-triangular, invertible matrix \( C \):
\begin{align}
    \label{eqn:phi_p_relation_c}
    \bm{\phi}(v) = \begin{pmatrix}
    1 \\ v \\ v^2/2
    \end{pmatrix}
    =
    \begin{pmatrix}
    c_{00} \\
    c_{10} & c_{11} \\
    c_{20} & c_{21} & c_{22}
    \end{pmatrix}
    \begin{pmatrix}
    p_0(v) \\ p_1(v) \\ p_2(v)
    \end{pmatrix}
    = 
    C \begin{pmatrix}
    p_0(v) \\ p_1(v) \\ p_2(v)
    \end{pmatrix}
    = C \mathbf{p}(v).
\end{align}
The span of \( w(v) \bm{\phi}(v) \) is an important subspace, which we will denote by \( \Phi \):
\begin{align*}
\Phi = \text{span} \{ w(v), vw(v), |v|^2 w(v)/2 \}.
\end{align*}
We are interested in the orthogonal projection onto \( \Phi \) with respect to \( \left\langle \cdot \right\rangle_\Lwinv \), which we will denote \( P_\Phi \).
The existence of the invertible matrix \( C \) shows that \( w(v) \mathbf{p}(v) \) is a basis for \( \Phi \).
In fact, it is an orthogonal basis, since its elements are orthonormal per \eqref{eqn:p-ortho}.
This gives an explicit formula for the orthogonal projection \( P_\Phi \):
\begin{align}
    \label{eqn:P_phi_def}
P_\Phi f = w(v) \mathbf{p}(v)^T \left\langle w(v) \mathbf{p}(v), f \right\rangle_\Lwinv = w(v) \mathbf{p}(v)^T \left\langle \mathbf{p}(v) f \right\rangle_v.
\end{align}
Denote the orthogonal complement of \( P_\Phi \) by \( P_\Phi^\perp = I - P_\Phi \).
Our proposed macro-micro decomposition of \( f \) is that induced by the pair of projections: 
\begin{align*}
f(x, v, t) = \underbrace{P_\Phi f(x, v, t)}_{\mathcal{N}} + \underbrace{P_\Phi^\perp f(x, v, t)}_{g}.
\end{align*}
The function \( \mathcal{N} \) has an explicit formula in terms of \( p_0, p_1, p_2 \) and the corresponding moments:
\begin{align}
    \label{eqn:defn-N}
    \mathcal{N}(x, v, t) = w(v) \mathbf{p}(v)^T \left\langle \mathbf{p}(v) f \right\rangle_v = w(v) [p_0(v) f_0(x, t) + p_1(v) f_1(x, t) + p_2(v) f_2(x, t)],
\end{align}
where $f_n$ denotes the moment of $f$ with respect to $p_n(v)$:
\begin{align*}
f_n(x, t) = \left\langle p_n(v) f(x, v, t) \right\rangle_v.
\end{align*}
It is easy to show that $\left\langle \mathbf{p} \mathcal{N} \right\rangle_v = \left\langle \mathbf{p} f \right\rangle_v$ using the orthogonality relation \eqref{eqn:p-ortho}.
Because $\mathbf{p}(v)$ and $\bm{\phi}(v)$ are related by the matrix $C$, $\mathcal{N}$ and $f$ share their first three velocity moments:
\begin{align*}
\left\langle \bm{\phi} \mathcal{N} \right\rangle_v = C \langle \mathbf{p}(v) \mathcal{N} \rangle_v = C \left\langle \mathbf{p}(v) f \right\rangle_v = \left\langle \bm{\phi} f \right\rangle_v,
\end{align*}
which immediately implies $\left\langle \bm{\phi} g \right\rangle_v = 0$.
That is, the microscopic part $g$ of the distribution function carries no charge, current, or kinetic energy density.

We now derive equations for the evolution of $\mathcal{N}$ and $g$ as functions of time.
For the macroscopic portion of the distribution function, we are less interested in \( \partial_t \mathcal{N}(x, v, t) \) itself,
and more interested in the evolution of the vector of conserved quantities.
Rather than use the typical equations for charge, current and energy, however, it is more convenient to derive
equations for the moments of \( f \) with respect to \( \mathbf{p}(v) \).
To this end, we apply the operation \( \star \mapsto \left\langle \mathbf{p}(v) \star \right\rangle_v \) to 
the Vlasov equation \eqref{eqn:vlasov-1d1v},
and write the result componentwise with the help of the recurrence relation \eqref{eqn:p-recurrence}
and derivative relations \eqref{eqn:p-derivatives}.
After integrating the $E \cdot \partial_v f$ term by parts, we obtain
\begin{align}
    \label{eqn:first-3}
\partial_t \underbrace{\begin{pmatrix}
f_0 \\ f_1 \\ f_2
\end{pmatrix}}_{U} +
\underbrace{\begin{pmatrix}
b_0 & a_0 \\
a_0 & b_1 & a_1 \\
& a_1 & b_2
\end{pmatrix}}_{V} \Dx \begin{pmatrix}
f_0 \\ f_1 \\ f_2
\end{pmatrix}
+ \underbrace{\begin{pmatrix}
0 \\ 0 \\ a_2 \Dx f_3
\end{pmatrix}}_{a_2 \Dx f_3 \mathbf{e}_2}
+ \qm E \underbrace{\begin{pmatrix}
0 \\
-d_{10} & 0 \\
-d_{20} & -d_{21} & 0
\end{pmatrix}}_{D_v} \begin{pmatrix}
f_0 \\ f_1 \\ f_2
\end{pmatrix} = 
\int_{\Omega_v}
\mathbf{p}(v) Q(f) \, \mathrm{d} v.
\end{align}
We have introduced the name \( U \) for the vector of the first three \( p \)-moments of \( f \), as
well as the symbols \( V \) and \( D_v \) for the flux and velocity derivative matrices, respectively.

Using the fact that \( \mathbf{p}(v) = C^{-1} \bm{\phi}(v) \), we can see that the right-hand side of \eqref{eqn:first-3} is a linear combination of
the moments of \( Q(f) \) with respect to the collision invariants, which by \eqref{eqn:Q-conserves} vanish identically.
Simplifying, we obtain the following initial-boundary value problem for \( U(x, t) \): 
\begin{align}
    \label{eqn:N-ibvp}
\begin{cases}
    \partial_t U(x, t) + V \Dx U + a_2 \Dx f_3 \mathbf{e}_2 + \qm E(x, t) D_v U = 0, & (x, t) \in \Omega_x \times (0, \infty), \\
    U(x, 0) = \left\langle \mathbf{p}(v) f(x, v, 0) \right\rangle_v, & x \in \Omega_x.
\end{cases}
\end{align}
We have derived a system of equations for \( U \), but it is not a closed system.
As expected, a term appears which requires closure: the flux of \( f_2 \) includes a term proportional to \( f_3 \).
This is the appearance in our formulation of the well-known moment closure problem.
The closure information must come from the part of \( f \) that we have projected away, namely \( g \):
\begin{align*}
f_3 = \left\langle p_3(v) f \right\rangle_v = \left\langle p_3(v) g \right\rangle_v,
\end{align*}
where the second equality holds because \( w(v) p_3(v) \) is orthogonal to the subspace \( \Phi \).

The evolution of \( g \) is obtained by substituting \( f = \mathcal{N} + g \) into \eqref{eqn:vlasov-1d1v} and applying the orthogonal projection \( \ppp \):
\begin{align*}
\partial_t \ppp g &= -\partial_t \ppp \mathcal{N} -\ppp \left( v \partial_x \mathcal{N} + \qm E \partial_v \mathcal{N} + v \partial_x g + \qm E \partial_v g \right) + \ppp Q(\mathcal{N} + g) \\
&= -\ppp \left( v \partial_x \mathcal{N} + \qm E \partial_v \mathcal{N} + v \partial_x g + \qm E \partial_v g \right) + \ppp Q(\mathcal{N} + g) \\
&\triangleq \ppp D[E, \mathcal{N}, g].
\end{align*} 
Here, we have interchanged the partial derivative \( \partial_t \) with the time-independent projection \( \ppp \),
and used the fact that \( \ppp \mathcal{N} = 0 \).
Combining this with initial and boundary conditions, \( g \) satisfies the initial-boundary value problem
\begin{align}
    \label{eqn:g-ibvp}
    \begin{cases}
        \partial_t g(x, v, t) = \ppp D[E, \mathcal{N}(x, v, t), g(x, v, t)], & (x, v, t) \in \Omega \times (0, \infty), \\
        g(x, v, 0) = f(x, v, 0) - \mathcal{N}(x, v, 0), & (x, v) \in \Omega, \\
        g(x, v_b, t) = -\mathcal{N}(x, v_b, t) & (x, v, t) \in \Omega_x \times \partial \Omega_v \times (0, \infty).
    \end{cases}
\end{align}
Together, equations \eqref{eqn:N-ibvp} and \eqref{eqn:g-ibvp} constitute an exact macro-micro decomposition of \eqref{eqn:vlasov-1d1v}.
They are coupled on the one hand by the appearance in \eqref{eqn:g-ibvp} of terms involving \( \mathcal{N} \), 
and on the other hand by the gradient of \( f_3 \).
Obtaining the charge, current, and kinetic energy density from the solution to \eqref{eqn:N-ibvp} is trivially accomplished with the transformation matrix \( C \):
\begin{align*}
\begin{pmatrix}
\rho \\ J \\ \kappa
\end{pmatrix}(x, t) = \int_{\Omega_v} \begin{pmatrix}
1 \\ v \\ |v|^2/2
\end{pmatrix} f \,\mathrm{d} v = C U(x, t).
\end{align*}
We also expand $\rho$, \( u \), and \( T \), defined by \eqref{eqn:uT-def}, in terms of \( C \) and \( U \).
\begin{align}
    \label{eqn:uT-formulas}
    \rho &= c_{00} f_0, \\
u &= \frac{1}{\rho} \int vf \,\mathrm{d} v = \frac{c_{11} f_1 + c_{10} f_0}{\rho}, \\
T &= \frac{1}{\rho} \int (v - u)^2 f \,\mathrm{d} v = \frac{1}{\rho} \left( \int v^2f\,\mathrm{d} v - \rho u^2 \right) 
= \frac{2c_{22} f_2 + 2c_{21} f_1 + 2c_{20} f_0}{\rho} - u^2.
\end{align}

\section{Time discretization}
\label{sec:time-discretization}
We turn now to a discussion of how the coupled initial-boundary value problems \eqref{eqn:N-ibvp} and \eqref{eqn:g-ibvp} are discretized in time.
Our strategy is to discretize \eqref{eqn:N-ibvp} using standard conservative techniques, while \eqref{eqn:g-ibvp} is discretized
using the projector-splitting dynamical low-rank integrator \cite{lubichProjectorsplittingIntegratorDynamical2014}.
Because the Vlasov equation must be coupled with an equation to advance the electric field,
developing a scheme that is conservative overall is rather involved, and requires a careful consideration
of the way that the conservation properties transfer from the continuous to the discrete level.
This section is organized as follows.

\begin{itemize}
    \item Section \ref{sec:g-dlra} presents the projector-splitting integrator and equations of motion for the low-rank factors of $g$.
    \item In Section \ref{sec:first-order-integrator} we present a first-order integrator which can achieve conservation of charge density, and either current or energy density depending how $f$ is coupled to the electric field.
    \item In Section \ref{sec:second-order-integrator} we present a second-order integrator which can achieve conservation of charge and energy density by coupling with \Ampere's law.
    \item Section \ref{sec:low-rank-factors} gives the details of calculations necessary to implement the evolution equations for the low-rank factors of $g$.
\end{itemize}

\subsection{Dynamical low-rank approximation of \( g \)}
\label{sec:g-dlra}
We now describe the dynamical low-rank approximation for \( g \), and write down
the equations of motion of the low-rank factors.
One of our main contributions in this work is the ability to combine a conservative scheme
with any dynamical low-rank integrator: the DLR equations of motion are unchanged 
by our scheme, and conservation does not rely on a rank augmentation at any intermediate step.
We choose the projector-splitting integrator for the ease with which it may
be formally extended to second-order accuracy via a Strang splitting scheme
with no increase of the rank in intermediate steps.
However, no proof of robust second-order accuracy exists.
In a very recent development, a robust second-order BUG integrator has been
introduced in \cite{cerutiRobustSecondorderLowrank2024},
with which our macro-micro decomposition should also be compatible.
Prior work has demonstrated locally conservative methods for the Vlasov equation with the traditional \cite{einkemmerMassMomentumEnergy2021} and robust basis-update \& Galerkin \cite{einkemmerRobustConservativeDynamical2023} dynamical low-rank integrators.
For the projector-splitting integrator, only a globally conservative method based on Lagrange multipliers 
\cite{einkemmerQuasiConservativeDynamicalLowRank2019} has been demonstrated previously.
To our knowledge this is the first locally conservative scheme in the projector-splitting integrator.

Our approach is to apply the standard projector-splitting integrator, but to use the weighted inner product \( \left\langle \cdot \right\rangle_\Lwinv \)
for projection along \( v \).
The low-rank ansatz for \( g \) is
\begin{align}
    \label{eqn:g-ansatz}
    g(x, v, t) = \sum_{ij} X_i(x, t) S_{ij}(t) V_j(v, t), 
\end{align}
with the basis functions \( X_i(x, t) \) satisfying
\begin{align}
X_i \in \left\{ X_i \in L^2(\Omega_x) : \left\langle X_i, X_k \right\rangle_x = \delta_{ik} \right\},
\end{align}
and the velocity basis functions \( V_j(v, t) \) satisfying
\begin{align}
V_j \in \left\{ V_j \in L^2(\Omega_v, w^{-1}(v)) : \left\langle V_j, V_l \right\rangle_\Lwinv = \delta_{jl} \right\}.
\end{align}
While the basis functions in \( x \) are chosen the same way as in the unmodified dynamical low-rank approximation (c.f. \cite{einkemmerLowRankProjectorSplittingIntegrator2018}),
the \( v \) basis functions are chosen to be orthonormal with respect to the \( w^{-1} \)-weighted inner product.

The equations of motion for the low-rank factors are obtained by projecting \eqref{eqn:g-ibvp} onto the subspaces spanned by $X_i$ and $V_j$.
The low-rank projection is of the form
\begin{align*}
    \partial_t g &= \sum_j \left\langle V_j, \ppp D[E, \mathcal{N}, g] \right\rangle_\Lwinv V_j - \sum_{ij} X_i \left\langle X_i V_j, \ppp D[E, \mathcal{N}, g] \right\rangle_\xLwinv V_j \\
    &\quad + \sum_i X_i \left\langle X_i, \ppp D[E, \mathcal{N}, g] \right\rangle_x.
\end{align*}
Defining $K_j$ and $L_i$ via
\begin{align}
K_j(x, t) = \sum_{i} X_i(x, t) S_{ij}(t), \qquad L_i(v, t) = \sum_{j} S_{ij}(t) V_j(v, t),
\end{align}
we may write
\begin{align}
    \label{eqn:dt-g-splittable}
    \partial_t g = \sum_j \partial_t K_j V_j + \sum_{ij} X_i \partial_t S_{ij} V_j + \sum_i X_i \partial_t L_i,
\end{align}
with
\begin{align}
    \label{eqn:K-eqn}
    \partial_t K_j &= \left\langle V_j, \ppp D[E, \mathcal{N}, g] \right\rangle_\Lwinv, \\
    \label{eqn:S-eqn}
    \partial_t S_{ij} &= -\left\langle X_i V_j, \ppp D[E, \mathcal{N}, g] \right\rangle_\xLwinv ,\\
    \label{eqn:L-eqn}
    \partial_t L_i &= \left\langle X_i, \ppp D[E, \mathcal{N}, g] \right\rangle_x.
\end{align}
We will initialize the low-rank factors by performing a singular value decomposition of the discretization of \( g \)
and truncating to rank \( r \).\footnote{
    Performing a full SVD of $g$ is feasible for the 1D1V problems we consider here, but it may be necessary
    to avoid this for higher-dimensional problems.
    Randomized algorithms such as the randomized SVD can be used to compute highly accurate decompositions of
    enormous matrices at a small fraction of the cost.
    See \cite{martinssonRandomizedNumericalLinear2020} for an excellent overview of these ideas.
}
Note that at this point no discretization in time nor operator splitting has been performed.
In the next section we describe how the standard first-order splitting of \eqref{eqn:dt-g-splittable} is
incorporated into a first-order in time discretization for our macro-micro decomposition.

\subsection{First-order integrator}
\label{sec:first-order-integrator}

In this section we describe a first-order in time integrator which conserves charge and either current or energy exactly.
The algorithm computes the following time advance
between times \( t^n \) and \( t^{n+1} = t^n + \Delta t \):
\begin{align*}
\begin{pmatrix}
E^{n} \\ f_0^{n} \\ f_1^{n} \\ f_2^{n} \\ X^{n} \\ S^{n} \\ V^{n}
\end{pmatrix}
\mapsto
\begin{pmatrix}
E^{n+1} \\ f_0^{n+1} \\ f_1^{n+1} \\ f_2^{n+1} \\ X^{n+1} \\ S^{n+1} \\ V^{n+1}
\end{pmatrix}
\end{align*}

\begin{enumerate}
    \item \textbf{Calculate the electric field:}
        The choice of electric field solution depends on whether current or energy conservation is desired.
        \begin{enumerate}
            \item \textbf{For current conservation}: 
                Determine the electric field from Gauss's law.
                Solve the following Poisson equation for the electric potential $\varphi^{n}$:
                \begin{align}
                    \label{eqn:gauss_law_poisson}
                    \partial_x^2 \varphi^{n} = -(\rho^{n} - \rho_0), \quad \rho^{n} = c_{00} f_0^{n}.
                \end{align}
                Then the electric field $E^*$ to be used is
                \begin{align}
                    \label{eqn:gauss-dt}
                    E^* = E^n = -\partial_x \varphi^n.
                \end{align}

        \item \textbf{For energy conservation}: Perform a single Forward Euler step of \Ampere's law \eqref{eqn:ampere} to obtain $E^{n+1}$:
        \begin{align}
            \label{eqn:ampere-fe}
        \frac{E^{n+1} - E^n}{\Delta t} = -J^n,
        \end{align}
        where \( J^n = \int_{\Omega_v} vf^n\,\mathrm{d} v = c_{10} f_0^n + c_{11} f_1^n \).
        For the current timestep, use a time-centered electric field:
        \begin{align}
        \label{eqn:E-centered}
        E^* = \frac{E^{n+1} + E^n}{2}.
        \end{align}
        \end{enumerate}

    \item \textbf{Advance conserved quantities:} Perform a single Forward Euler timestep of \eqref{eqn:first-3}:
    \begin{align}
        \label{eqn:step-f0}
        \frac{f_0^{n+1} - f_0^n}{\Delta t} &= \Dx (-b_0 f_0^n - a_0 f_1^n), \\
        \label{eqn:step-f1}
    \frac{f_1^{n+1} - f_1^n}{\Delta t} &= -a_1 \partial_x f_2^n - b_1 \partial_x f_1^n - a_0 \partial_x f_0^n + \qm E^* d_{10} f_0^n, \\
    \label{eqn:step-f2}
    \frac{f_2^{n+1} - f_2^n}{\Delta t} &= -a_2 \partial_x f_3^n - b_2 \partial_x f_2^n - a_1 \partial_x f_1^n + \qm E^* (d_{20} f_0^n + d_{21} f_1^n).
    \end{align}
    The third moment \( f_3^n \) can be computed from the low-rank factors like so:
    \begin{align}
        \label{eqn:f-3}
    f_3^n = \sum_{ij} X^n_i S^n_{ij} \left\langle p_3(v) V^n_j \right\rangle_v.
    \end{align}
    Define \( \mathcal{N}^n(x, v) \) by
    \begin{align*}
    \mathcal{N}^n(x, v) = w(v) [p_0(v) f_0^n(x) + p_1(v) f_1^n(x) + p_2(v) f_2^n(x)].
    \end{align*}
    
    \item \textbf{K step:} Calculate \( K_j^{n}(x) = \sum_i X_i^n(x) S^n_{ij} \), and perform a Forward Euler step of \eqref{eqn:K-eqn} to obtain \( K_j^{n+1}(x) \):
    \begin{align}
    \label{eqn:K-step}
    \frac{K_j^{n+1} - K_j^n}{\Delta t} = \left\langle V^n_j, \ppp D\left[E^*, \mathcal{N}^n, \sum_l K_l^n V_l^n \right] \right\rangle_\Lwinv.
    \end{align}
    Perform a QR decomposition of \( K_j^{n+1}(x) \) to obtain \( X^{n+1}_i(x) \) and \( S_{ij}^\prime \).

    \item \textbf{S step:} Perform a Forward Euler step of \eqref{eqn:S-eqn} to obtain \( S_{ij}^{\prime\prime} \):
    \begin{align}
    \label{eqn:S-step}
    \frac{S_{ij}^{\prime\prime} - S_{ij}^\prime}{\Delta t} = -\left\langle X_i^{n+1} V_j^n, \ppp D\left[E^*, \mathcal{N}^n, \sum_{kl} X_k^{n+1} S_{kl}^\prime V_l^n \right] \right\rangle_\xLwinv.
    \end{align}

    \item \textbf{L step:} Calculate \( L_i^{n}(v) = \sum_j S^{\prime\prime}_{ij} V_j^n(v) \), and perform a Forward Euler step of \eqref{eqn:L-eqn} to obtain \( L_i^{n+1}(v) \):
    \begin{align}
    \label{eqn:L-step}
    \frac{L_i^{n+1} - L_i^n}{\Delta t} = \left\langle X^{n+1}_i, \ppp D\left[E^*, \mathcal{N}^n, \sum_{k} X_k^{n+1} L_k^n \right] \right\rangle_x.
    \end{align}
        At this point, we must perform a QR decomposition of $L^{n+1}_i(v)$ to obtain
        the new velocity basis $V_j^{n+1}$ and singular value matrix $S_{ij}^{n+1}(v)$.
        However, we also require that the velocity basis so obtained is orthogonal to $\Phi$.

        We accomplish this by prepending the three functions $w(v) \mathbf{p}(v)$ to the vector of functions $L_i^{n+1}(v)$.
        Denoting vector concatenation by square brackets, we compute a QR decomposition
        \begin{align}
            \label{eqn:V_jnplus1_QR}
            [ w(v) \mathbf{p}(v) \enspace V_j^{n+1}(v) ] R = [w(v) \mathbf{p}(v) \enspace L_i^{n+1}(v)],
        \end{align}
        with respect to the function inner product $\left\langle \cdot \right\rangle_{w(v)^{-1}}$.
        The QR decomposition leaves the first three functions unchanged since they are already orthogonal,
        and guarantees that the resulting basis $V_j^{n+1}(v)$ is orthogonal to $\Phi$, which is spanned by $w(v) \mathbf{p}(v)$.

        The updated matrix of singular values is then given by the trailing $r \times r$ minor of $R$:
        \begin{align*}
            S^{n+1}_{ij} = R_{i+3,j+3}.
        \end{align*}

\end{enumerate}

\subsubsection{Proof of conservation}

    We now prove local conservation of our scheme.
The local conservation statement is satisfied by conservative dynamical low-rank integrators such as \cite{einkemmerRobustConservativeDynamical2023}
and \cite{einkemmerConservationPropertiesAugmented2023}, and is important for ensuring that the solution
has the properties of a hyperbolic conservation law, such as finite wavespeeds.
We formulate this property in the following theorem:
\begin{thm}
    \label{thm:local-conservation}
    Define the conserved quantities of charge, current, and kinetic energy density at time level $t^n$ as follows:
    \begin{align}
        \label{eqn:def-rho-n}
        \rho^n &= \left\langle f^n, 1 \right\rangle_v, \\
        \label{eqn:def-J-n}
        J^n &= \left\langle f^n, v \right\rangle_v, \\
        \label{eqn:def-kappa-n}
        \kappa^n &= \left\langle f^n, \frac{1}{2} v^2 \right\rangle_v.
    \end{align}
    The first-order integration algorithm of Section \ref{sec:first-order-integrator} satisfies three
    local source-balance laws
    \begin{align}
        \label{eqn:local_rho_conservation}
        & \frac{\rho^{n+1} - \rho^n}{\Delta t} + \partial_x J^n = 0, \\
        \label{eqn:local_J_conservation}
        & \frac{J^{n+1} - J^n}{\Delta t} + 2\partial_x \kappa^n = \rho^n E^*, \\
        \label{eqn:local_kappa_conservation}
        & \frac{\kappa^{n+1} - \kappa^n}{\Delta t} + \partial_x \left\langle f^n, \frac{v^3}{2} \right\rangle_v = J^n E^*.
    \end{align}
\end{thm}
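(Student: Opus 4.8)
The plan is to reduce all three source-balance laws to a single matrix statement about the forward Euler update of the macro moments. The key observation is that the conserved quantities never see the low-rank part $g$. Because the velocity basis $V_j^n$ is built by the QR step \eqref{eqn:V_jnplus1_QR} to be orthogonal to $\Phi$ in $\langle\cdot,\cdot\rangle_{w^{-1}(v)}$, we have $\langle \mathbf{p}(v)\, g^n\rangle_v = \mathbf{0}$, and hence $\langle\bm{\phi}(v)\,g^n\rangle_v = C\langle\mathbf{p}(v)\,g^n\rangle_v = \mathbf{0}$ at every time level. Consequently $f^n=\mathcal{N}^n+g^n$ and $\mathcal{N}^n$ share their first three moments, so that $(\rho^n, J^n, \kappa^n)^T = \langle\bm{\phi}(v) f^n\rangle_v = C U^n$, where $U^n=(f_0^n,f_1^n,f_2^n)^T$ and I have used $\langle\mathbf{p}(v)\mathcal{N}^n\rangle_v=U^n$ from \eqref{eqn:defn-N}. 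Thus the conserved quantities are determined entirely by the macro moments advanced in step 2 of the algorithm, and I never have to track the $K$, $S$, or $L$ updates.

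First I would collect the macro update \eqref{eqn:step-f0}--\eqref{eqn:step-f2} into the matrix form $\frac{U^{n+1}-U^n}{\Delta t} = -V\Dx U^n - a_2\,\Dx f_3^n\,\mathbf{e}_2 - E^* D_v U^n$, which is precisely forward Euler applied to \eqref{eqn:first-3}. Left-multiplying by the constant matrix $C$ and using $(\rho,J,\kappa)^T=CU$ turns the left-hand side into the forward differences of the conserved quantities, so it remains only to identify the right-hand side with the physical fluxes and sources.

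The bulk of the work is a set of algebraic identities that all descend from $\bm{\phi}(v)=C\mathbf{p}(v)$. For the flux I would establish $C\big(VU + a_2 f_3\,\mathbf{e}_2\big)=\big(J,\,2\kappa,\,\langle v^3 f/2\rangle_v\big)^T$: the three-term recurrence \eqref{eqn:p-recurrence} writes $v\mathbf{p}(v)$ in terms of $\mathbf{p}(v)$ and $p_3$, so that $v\bm{\phi}(v)=C\,v\mathbf{p}(v)$, and taking $p$-moments reproduces the physical flux vector exactly, with the $p_3$ tail of $v p_2$ accounting for the closure contribution $a_2 f_3$. For the source, differentiating $\bm{\phi}=C\mathbf{p}$ and integrating the $E\Dv$ term by parts gives $-E^* C D_v U = E^* C\langle\mathbf{p}' f\rangle_v = E^*\langle\bm{\phi}' f\rangle_v = E^*(0,\rho,J)^T$, using $\bm{\phi}'(v)=(0,1,v)^T$ together with the derivative relations \eqref{eqn:p-derivatives}. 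Assembling the three components yields exactly \eqref{eqn:local_rho_conservation}--\eqref{eqn:local_kappa_conservation}. I note that $E^*$ enters only as a scalar multiplier, so the laws hold verbatim whether $E^*$ is taken from Gauss's law or from the centered \Ampere{} update.

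The step I expect to be most delicate is the energy flux. Unlike charge and current, the kinetic energy flux $\langle v^3 f^n/2\rangle_v$ is not a macro quantity: expanding $v^3$ through two applications of the recurrence \eqref{eqn:p-recurrence} produces a genuine $f_3^n$ contribution, which must line up with the closure term $a_2\,\Dx f_3^n$ inherited from the flux matrix $V$. Verifying that the $f_0, f_1, f_2$ coefficients coming from $CV$ and the $f_3$ coefficient coming from $a_2 C\mathbf{e}_2$ together match the recurrence expansion of $\langle v^3 f/2\rangle_v$ is the one place where the moment-closure structure of the scheme must be tracked carefully, although it ultimately reduces to comparing a handful of products of $\{c_{ij}, a_n, b_n\}$.
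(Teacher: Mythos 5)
Your proposal is correct and follows essentially the same route as the paper's proof: both rest on the QR-enforced orthogonality $V_j^n\perp\Phi$ (so $g^n$ carries no $\bm{\phi}$-moments), the relation $\bm{\phi}=C\mathbf{p}$, the recurrence \eqref{eqn:p-recurrence} to identify $C(VU^n+a_2f_3^n\mathbf{e}_2)$ with the flux vector $(J^n,2\kappa^n,\langle f^n,v^3/2\rangle_v)^T$, and the derivative relations \eqref{eqn:p-derivatives} to identify $-C D_v U^n$ with $(0,\rho^n,J^n)^T$. The only difference is presentational: you left-multiply the vector update by $C$ once, whereas the paper carries out the same algebra component by component for $\rho$, $J$, and $\kappa$ separately.
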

\begin{proof}
    By \eqref{eqn:V_jnplus1_QR}, $P_\Phi V_j^n = 0$, so $\left\langle g^n, \bm{\phi}(v) \right\rangle_v = 0$ for all times $n$.
    Therefore the only contribution to the conserved quantities comes from $f_0$, $f_1$, and $f_2$.

    To show mass conservation, we make use of \eqref{eqn:phi_p_relation_c}, \eqref{eqn:step-f0},
    and \eqref{eqn:p-recurrence} to obtain
    \begin{align*}
        \frac{\rho^{n+1} - \rho^n}{\Delta t} = c_{00} \frac{f_0^{n+1} - f_0^n}{\Delta t} &= c_{00} \Dx (-b_0 f_0^n - a_0 f_1^n) \\
                                                                                                                     &= -c_{00} \Dx \left\langle f^n, v p_0(v) \right\rangle_v \\
                                                                                                                     &= - \Dx \left\langle f^n, v \right\rangle_v \\
                                                                                                                     &= -\Dx J^n.
    \end{align*}

    For current conservation, we use \eqref{eqn:step-f1} and \eqref{eqn:phi_p_relation_c}:
    \begin{align*}
        \frac{J^{n+1} - J^n}{\Delta t} &= c_{11} \frac{f_1^{n+1} - f_1^n}{\Delta t} + c_{10} \frac{f_0^{n+1} - f_0^n}{\Delta t} \\
                                                                         &= c_{11}(-a_1 \partial_x f_2^n - b_1 \partial_x f_1^n - a_0 \partial_x f_0^n + E^* d_{10} f_0^n) + c_{10}(-b_0 \partial_x f_0^n - a_0 \partial_x f_1^n) \\
                                                                         &= -c_{11} \partial_x \left\langle f^n, v p_1(v) \right\rangle_v - c_{10} \partial_x \left\langle f^n, v p_0(v) \right\rangle_v + c_{11} E^* d_{10} \left\langle f^n, p_0(v) \right\rangle_v \\
                                                                         &= -\partial_x \left\langle f^n, v^2 \right\rangle_v + c_{11} E^* \left\langle f^n, p_1'(v) \right\rangle_v \\
                                                                         &= -\partial_x \left\langle f^n, v^2 \right\rangle_v - E^* \left\langle \partial_v f^n, v \right\rangle_v \\
                                                                         &= - 2\partial_x \kappa^n + E^* \rho^n.
    \end{align*}

    For kinetic energy conservation, using \eqref{eqn:step-f2} we obtain
    \begin{align*}
        2\frac{\kappa^{n+1} - \kappa^n}{\Delta t} &= c_{22} \frac{f_2^{n+1} - f_2^n}{\Delta t} + c_{21} \frac{f_1^{n+1} - f_1^n}{\Delta t} + c_{20} \frac{f_0^{n+1} - f_0^n}{\Delta t} \\
                                                                           &= -\partial_x \left\langle f^n, v [c_{22} p_2(v) + c_{21} p_1(v) + c_{20} p_0(v)] \right\rangle_v \\
                                                                           &\quad + E^* \left\langle f^n,  [c_{22} p_2'(v) + c_{21} p_1'(v)] \right\rangle_v \\
                                                                           &= -\partial_x \left\langle f^n, v^3 \right\rangle_v - E^* \left\langle \partial_v f^n, v^2 \right\rangle_v.
    \end{align*}
    Simplifying slightly,
    \begin{align*}
        \frac{\kappa^{n+1} - \kappa^n}{\Delta t} + \partial_x \left\langle f^n, \frac{v^3}{2} \right\rangle_v = J^n E^*.
    \end{align*}
    This completes the proof.
\end{proof}

Theorem \ref{thm:local-conservation} shows that the macroscopic current and kinetic energy satisfy the same source-balance laws as the full kinetic equation.
The source terms in \eqref{eqn:local_J_conservation} and \eqref{eqn:local_kappa_conservation} reflect
the fact that the particle distribution function exchanges momentum and energy with the electric field.
Depending on the choice of electric field solve, we can show that the total energy, including electric field energy, satisfies a local conservation law:
\begin{corollary}
    Define the total energy at time level $t^n$ as
    \begin{align}
        \label{eqn:def_e_n}
    e^n = \kappa^n + \frac{|E^n|^2}{2}.
    \end{align}
    Then the first-order integration algorithm of Section \ref{sec:first-order-integrator} with the choice of \Ampere solve and $E^* = \frac{E^{n+1} + E^n}{2}$ satisfies a local conservation law,
    \begin{align}
        \label{eqn:local_e_conservation}
        \frac{e^{n+1} - e^n}{\Delta t} + \partial_x \left\langle f^n, \frac{v^3}{2} \right\rangle_v = 0.
    \end{align}
\end{corollary}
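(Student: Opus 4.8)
The plan is to reduce the claim to the kinetic energy balance \eqref{eqn:local_kappa_conservation} already established in Theorem \ref{thm:local-conservation}, so that the only new ingredient is the evolution of the electric field energy $|E^n|^2/2$ under the \Ampere\ update. I would begin by writing the forward difference of the total energy \eqref{eqn:def_e_n} as the sum of a kinetic contribution and a field contribution,
\begin{align*}
\frac{e^{n+1}-e^n}{\Delta t} = \frac{\kappa^{n+1}-\kappa^n}{\Delta t} + \frac{1}{2\Delta t}\left( |E^{n+1}|^2 - |E^n|^2 \right),
\end{align*}
and then treat the two pieces separately.

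For the kinetic piece I would simply quote \eqref{eqn:local_kappa_conservation}, which supplies both the flux term $\partial_x \left\langle f^n, \frac{v^3}{2} \right\rangle_v$ and the source term $J^n E^*$. The entire content of the corollary is then that the field piece cancels this source exactly. To see this, I would factor the difference of squares as $|E^{n+1}|^2 - |E^n|^2 = (E^{n+1}-E^n)(E^{n+1}+E^n)$ and substitute the two defining relations of the \Ampere\ branch of Step 1: the Forward Euler update \eqref{eqn:ampere-fe}, which gives $E^{n+1}-E^n = -\Delta t\, J^n$, and the time-centered field \eqref{eqn:E-centered}, which gives $E^{n+1}+E^n = 2E^*$. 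This yields
\begin{align*}
\frac{1}{2\Delta t}\left( |E^{n+1}|^2 - |E^n|^2 \right) = \frac{1}{2\Delta t}(-\Delta t\, J^n)(2E^*) = -J^n E^*.
\end{align*}

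Adding the two contributions, the source terms $+J^n E^*$ and $-J^n E^*$ cancel and only the flux divergence survives, giving \eqref{eqn:local_e_conservation}. The computation is short and contains no real obstacle; the one point worth emphasizing is that the cancellation is not accidental but is engineered by the choice of the time-centered field. It is precisely the compatibility between the discrete \Ampere\ update and the centered $E^*$ --- the same structure that underlies energy-conserving Vlasov--\Ampere\ and PIC schemes --- that makes the difference-of-squares telescope against the kinetic source term. With the Gauss branch $E^* = E^n$ this algebra would fail, which is why the corollary is stated only for the \Ampere\ solve.
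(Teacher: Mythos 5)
Your proposal is correct and follows essentially the same route as the paper's proof: quote the kinetic energy balance \eqref{eqn:local_kappa_conservation}, factor the field-energy difference as $\tfrac{1}{2\Delta t}(E^{n+1}+E^n)(E^{n+1}-E^n)$, and use \eqref{eqn:ampere-fe} together with \eqref{eqn:E-centered} to cancel the source $J^n E^*$. Your closing remark about why the cancellation requires the centered field is a nice addition but does not change the argument.
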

\begin{proof}
    The proof is a simple application of \eqref{eqn:local_kappa_conservation} and \eqref{eqn:ampere-fe}:
    \begin{align*}
        \frac{e^{n+1} - e^n}{\Delta t} &= \frac{\kappa^{n+1} - \kappa^n}{\Delta t} + \frac{|E^{n+1}|^2 - |E^n|^2}{2\Delta t}  \\
                                       &= -\partial_x \left\langle f^n, \frac{v^3}{2} \right\rangle_v + \frac{E^{n+1} + E^n}{2} J^n + \frac{E^{n+1} + E^n}{2} \frac{E^{n+1} - E^n}{\Delta t} \\
                                       &= -\partial_x \left\langle f^n, \frac{v^3}{2} \right\rangle_v.
    \end{align*}
\end{proof}
To summarize, \eqref{eqn:local_rho_conservation} demonstrates exact local conservation of charge,
and \eqref{eqn:local_e_conservation} demonstrates exact local conservation of total energy
for the \Ampere's law variation of the first-order integrator.
Per \eqref{eqn:local_J_conservation}, current is not locally conserved, since the mobile particles ``push against'' the background
charge density $\rho_0$.
However, we can prove a global conservation statement for current in the special case of periodic
boundary conditions and the Gauss's law variation of the first-order integrator:
\begin{corollary}
If Gauss's law and the uncentered electric field $E^* = E^n$ are chosen in the first-order integrator, then
\begin{align*}
    \int_{\Omega_x} \frac{J^{n+1} - J^n}{\Delta t}\,\mathrm{d} x = 0.
\end{align*}
\end{corollary}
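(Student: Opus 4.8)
The plan is to integrate the local current source-balance law \eqref{eqn:local_J_conservation} from Theorem \ref{thm:local-conservation} over the periodic spatial domain $\Omega_x$ and argue that every term on the right-hand side integrates to zero. First I would rewrite \eqref{eqn:local_J_conservation} as
\begin{align*}
\int_{\Omega_x} \frac{J^{n+1} - J^n}{\Delta t}\,\mathrm{d} x = \int_{\Omega_x} \left( -2 \partial_x \kappa^n + \rho^n E^* \right) \mathrm{d} x.
\end{align*}
The flux contribution $\int_{\Omega_x} 2\partial_x \kappa^n\,\mathrm{d} x$ vanishes immediately because $\kappa^n$ is spatially periodic, so the claim reduces to showing that the discrete source term satisfies $\int_{\Omega_x} \rho^n E^*\,\mathrm{d} x = 0$. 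This is exactly the discrete analogue of the continuous identity invoked in \eqref{eqn:global_conservation_law-2}.

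The crux of the argument is then to exploit the fact that in the Gauss's law branch the electric field is slaved to the charge density. By \eqref{eqn:gauss-dt} and \eqref{eqn:gauss_law_poisson} we have $E^* = E^n = -\partial_x \varphi^n$ and $\partial_x E^n = \rho^n - \rho_0$, so I would replace the density by $\rho^n = \partial_x E^n + \rho_0$ and split the source integral as
\begin{align*}
\int_{\Omega_x} \rho^n E^n\,\mathrm{d} x = \int_{\Omega_x} (\partial_x E^n)\, E^n\,\mathrm{d} x + \rho_0 \int_{\Omega_x} E^n\,\mathrm{d} x.
\end{align*}

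Finally I would dispatch both terms using periodicity: the first equals $\tfrac12 \int_{\Omega_x} \partial_x |E^n|^2\,\mathrm{d} x = 0$, while the second vanishes because $\int_{\Omega_x} E^n\,\mathrm{d} x = -\int_{\Omega_x} \partial_x \varphi^n\,\mathrm{d} x = 0$ for a periodic potential $\varphi^n$. The only genuinely load-bearing step is the substitution $\rho^n = \partial_x E^n + \rho_0$: it is precisely the \emph{discrete} Gauss's law that converts the source term into a perfect derivative plus a potential-gradient term, both of which integrate to zero. I would emphasize that this is where the choice $E^* = E^n$ matters — the time-centered \Ampere\ field of the energy-conserving branch does not in general satisfy a discrete Gauss's law, so this global current-conservation statement is specific to the Gauss's law variation, mirroring how the continuous computation \eqref{eqn:global_conservation_law-2} relies on $E = -\nabla_x \varphi$ together with Gauss's law.
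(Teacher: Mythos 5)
Your proposal is correct and follows essentially the same route as the paper: integrate \eqref{eqn:local_J_conservation} over the periodic domain, drop the flux term, and use Gauss's law together with $E^*=E^n=-\partial_x\varphi^n$ to reduce the source integral to integrals of total derivatives. The only cosmetic difference is that you substitute $\rho^n=\partial_x E^n+\rho_0$ directly, whereas the paper works through the potential $\varphi^n$; the two computations are identical in content.
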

\begin{proof}
    Integrate \eqref{eqn:local_J_conservation} over the periodic domain $\Omega_x$ to obtain
    \begin{align*}
        \int_{\Omega_x} \frac{J^{n+1} - J^n}{\Delta t}\,\mathrm{d} x = \int_{\Omega_x} \rho^n E^* \, \mathrm{d} x = -\int_{\Omega_x} \rho^n \partial_x \varphi^n \, \mathrm{d} x = \int_{\Omega_x} (\partial_x^2 \varphi^n - \rho_0) \partial_x \varphi^n \, \mathrm{d} x = 0.
    \end{align*}
    We have used \eqref{eqn:gauss_law_poisson} and \eqref{eqn:gauss-dt}, and eliminated all
    integrals of total derivatives.
\end{proof}

\begin{remark}
    A fully discrete conservative scheme for the first-order integrator of Section \ref{sec:first-order-integrator}
    is easily achieved by using a conservative spatial discretization for the $\partial_x$ operators
    appearing in equations \eqref{eqn:step-f0}-\eqref{eqn:step-f2}.
    Such a scheme will satisfy exact conservation of charge, and conservation of energy if \Ampere's law is used.
    Furthermore, a discrete scheme using Gauss's law will exactly conserve current on a periodic
    domain if the discrete $\partial_x$ operators in \eqref{eqn:gauss_law_poisson} and \eqref{eqn:gauss-dt}
    satisfy a summation-by-parts identity.
    One such discretization is the common second-order centered finite difference scheme.
\end{remark}

\subsection{Second-order time integrator}
\label{sec:second-order-integrator}

The projector-splitting framework may be formally extended to second-order accuracy
by using a Strang splitting of \eqref{eqn:dt-g-splittable}.
A Strang splitting of the dynamical low-rank projection has been used in \cite{einkemmerLowRankProjectorSplittingIntegrator2018} and \cite{einkemmerAcceleratingSimulationKinetic2023}
to obtain second-order accurate solutions to certain problems.
We stress that it has not been proven that the Strang splitting of the DLR projection
is robust to vanishing singular values, as it has for the first-order splitting \cite{kieriDiscretizedDynamicalLowRank2016}.
That is, no robust second-order accuracy result exists.
Nevertheless, the second-order Strang splitting scheme shows significant practical benefits
on plasma problems, as demonstrated for example in \cite{einkemmerAcceleratingSimulationKinetic2023}.
To achieve overall second-order accuracy, some care is required when coupling the DLR scheme with the time
splitting for $\mathcal{N}$ and $E$.
Here we present one such scheme and prove that it exactly conserves energy.

\begin{enumerate}
    \item \textbf{Half step of conserved quantities}: $(f_0^n, f_1^n, f_2^n) \mapsto (f_0^{n+1/2}, f_1^{n+1/2}, f_2^{n+1/2})$ using $g^n, E^n$:
        \begin{align}
            \label{eqn:step-f0-2}
        \frac{f_0^{n+1/2} - f_0^n}{\Delta t / 2} &= \partial_x (-b_0 f_0^n - a_0 f_1^n), \\
            \label{eqn:step-f1-2}
        \frac{f_1^{n+1/2} - f_1^n}{\Delta t / 2} &= \partial_x (-a_1 f_2^n - b_1 f_1^n - a_0 f_0^n) + E^n d_{10}f_0^n, \\
            \label{eqn:step-f2-2}
        \frac{f_2^{n+1/2} - f_2^n}{\Delta t / 2} &= \partial_x (-a_2 f_3^n - b_2 f_2^n - a_1 f_1^n) + E^n (d_{20} f_0^n + d_{21} f_1^n),
        \end{align}
        where $f_3^n$ is defined as in \eqref{eqn:f-3}.
        Define
        \begin{align*}
            \mathcal{N}^{n+1/2} = w(v) [p_0(v) f_0^{n+1/2} + p_1(v) f_1^{n+1/2} + p_2(v) f_2^{n+1/2}].
        \end{align*}

    \item \textbf{\Ampere\;solve}: $E_n \mapsto E^{n+1}$ using $f_0^{n+1/2}, f_1^{n+1/2}$:
        \begin{align}
            \label{eqn:ampere-2nd-order}
            \frac{E^{n+1} - E^n}{\Delta t} = -J^{n+1/2}, \quad J^{n+1/2} = c_{10}f_0^{n+1/2} + c_{11} f_1^{n+1/2}.
        \end{align}
        Define $E^{n+1/2} = \frac{E^{n+1} + E^n}{2}$.

    \item \textbf{K step}: $(X^n, S^n, V^n) \mapsto (X^{n+1/2}, S^1, V^n)$ using $\mathcal{N}^{n+1/2}, E^{n+1/2}$.

    \item \textbf{S step}: $(X^{n+1/2}, S^1, V^n) \mapsto (X^{n+1/2}, S^2, V^n)$ using $\mathcal{N}^{n+1/2}, E^{n+1/2}$.

    \item \textbf{L step}: $(X^{n+1/2}, S^2, V^n) \mapsto (X^{n+1/2}, S^{n+1/2}, V^{n+1/2})$ using $\mathcal{N}^{n+1/2}, E^{n+1/2}$.

\item \textbf{L step}: $(X^{n+1/2}, S^{n+1/2}, V^{n+1/2}) \mapsto (X^{n+1/2}, S^3, V^{n+1})$ using $\mathcal{N}^{n+1/2}, E^{n+1/2}$.

\item \textbf{S step}: $(X^{n+1/2}, S^3, V^{n+1}) \mapsto (X^{n+1/2} S^4, V^{n+1})$ using $\mathcal{N}^{n+1/2}, E^{n+1/2}$.

\item \textbf{K step}: $(X^{n+1/2}, S^4, V^{n+1}) \mapsto (X^{n+1}, S^{n+1}, V^{n+1})$ using $\mathcal{N}^{n+1/2}, E^{n+1/2}$.

\item $(f_0^n, f_1^n, f_2^n) \mapsto (f_0^{n+1}, f_1^{n+1}, f_2^{n+1})$ using $g^{n+1/2}, E^{n+1/2},$ and $(f_0, f_1, f_2)^{n+1/2}$:
        \begin{align}
            \label{eqn:f0-2nd-order-step}
            \frac{f_0^{n+1} - f_0^n}{\Delta t} &= \partial_x (-b_0 f_0^{n+1/2} - a_0 f_1^{n+1/2}), \\
            \label{eqn:f1-2nd-order-step}
            \frac{f_1^{n+1} - f_1^n}{\Delta t} &= \partial_x (-a_1 f_2^{n+1/2} - b_1 f_1^{n+1/2} - a_0 f_0^{n+1/2}) + E^{n+1/2} d_{10}f_0^{n+1/2}, \\
            \label{eqn:f2-2nd-order-step}
            \frac{f_2^{n+1} - f_2^n}{\Delta t} &= \partial_x (-a_2 f_3^{n+1/2} - b_2 f_2^{n+1/2} - a_1 f_1^{n+1/2}) + E^{n+1/2} (d_{20} f_0^{n+1/2} + d_{21} f_1^{n+1/2}),
        \end{align}
        where
        \begin{align*}
            f_3^{n+1/2} = \sum_{ij} X_i^{n+1/2} S_{ij}^{n+1/2} \left\langle p_3(v) V_j^{n+1/2} \right\rangle_v.
        \end{align*}
\end{enumerate}
Note that in step 9 we make use of $g^{n+1/2}$, which is defined in terms of $(X^{n+1/2}, S^{n+1/2}, V^{n+1/2})$.
This means that we cannot easily combine steps 5 and 6 into a single substep of size $\Delta t$ as is common in
Strang splitting schemes.
To achieve overall second-order accuracy in time, each of the low-rank factor steps 3-8 must be
accomplished using a standard time integration scheme that is at least second-order accurate.
We choose the SSPRK2 scheme \cite{gottliebStrongStabilityPreserving2011}, 
which for an autonomous ordinary differential equation $q'(t) = F(q)$ is
\begin{align}
    q^* &= q^n + \Delta t F(q^n) \\
    q^{n+1} &= \frac{q^n}{2} + \frac{1}{2} \left( q^* + \Delta t F(q^*) \right). \nonumber 
\end{align}

\subsubsection{Proof of charge and energy conservation}
\begin{thm}
    \label{thm:conservation2}
    The second-order integrator satisfies local conservation of charge and total energy:
    \begin{align}
        \label{eqn:local_rho_conservation_2nd_order}
        &\frac{\rho^{n+1} - \rho^n}{\Delta t} + \partial_x J^{n+1/2} = 0, \\
        \label{eqn:local_e_conservation_2nd_order}
        &\frac{e^{n+1} - e^n}{\Delta t} + \partial_x \left\langle \frac{v^3}{2}, f^{n+1/2} \right\rangle_v = 0,
    \end{align}
    where $\rho^n$ and $e^n$ are defined as in \eqref{eqn:def-rho-n} and \eqref{eqn:def_e_n} respectively,
    and $J^{n+1/2}$ is defined as in \eqref{eqn:def-J-n} but at $t^{n+1/2}$.
\end{thm}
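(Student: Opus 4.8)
The plan is to mirror the first-order argument of Theorem~\ref{thm:local-conservation} almost verbatim, after recognizing that the conservation structure is dictated entirely by the outer moment update (step 9) together with the \Ampere\ solve \eqref{eqn:ampere-2nd-order}: the half-step of conserved quantities, the field solve, and the symmetric Strang sweep of the low-rank factors (steps 1--8) serve only to manufacture the midpoint data $f^{n+1/2}$, $E^{n+1/2}$, and $g^{n+1/2}$. The first fact I would record, exactly as before, is that the orthogonalized QR in each L-step keeps $P_\Phi V_j = 0$ at every stage, so $\left\langle g, \bm{\phi}(v)\right\rangle_v = 0$ throughout. Hence $\rho$, $J$, and $\kappa$ at any time level are carried entirely by $f_0, f_1, f_2$, and the microscopic part $g^{n+1/2}$ enters the moment update only through the third moment $f_3^{n+1/2}$ in \eqref{eqn:f2-2nd-order-step}.

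For charge conservation I would start from the step-9 update \eqref{eqn:f0-2nd-order-step} and repeat the mass computation of Theorem~\ref{thm:local-conservation} with $n \mapsto n+1/2$ in the fluxes. Using $c_{00} p_0 = 1$, the recurrence \eqref{eqn:p-recurrence}, and $\rho^{n+1} = c_{00} f_0^{n+1}$, this gives $\frac{\rho^{n+1} - \rho^n}{\Delta t} = -\partial_x \left\langle f^{n+1/2}, v\right\rangle_v = -\partial_x J^{n+1/2}$, which is \eqref{eqn:local_rho_conservation_2nd_order}. The only change from the first-order case is that the fluxes are evaluated at the midpoint values.

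The total-energy identity is where the coupling matters, so I would split it into three pieces. Running the kinetic-energy calculation of Theorem~\ref{thm:local-conservation} on the step-9 updates \eqref{eqn:f1-2nd-order-step}--\eqref{eqn:f2-2nd-order-step}, with $n \mapsto n+1/2$ in the fluxes and $E^{n+1/2}$ as the field, yields
\begin{align*}
\frac{\kappa^{n+1} - \kappa^n}{\Delta t} + \partial_x \left\langle \frac{v^3}{2}, f^{n+1/2}\right\rangle_v = J^{n+1/2} E^{n+1/2},
\end{align*}
where the third moment picks up the microscopic contribution through $f_3^{n+1/2}$. Next I would differentiate the field energy using the \Ampere\ solve \eqref{eqn:ampere-2nd-order} and $E^{n+1/2} = \tfrac{1}{2}(E^{n+1}+E^n)$, exactly as in the first-order corollary, to get $\frac{|E^{n+1}|^2 - |E^n|^2}{2\Delta t} = E^{n+1/2}\,\frac{E^{n+1}-E^n}{\Delta t} = -E^{n+1/2} J^{n+1/2}$. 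Adding the two pieces and recalling $e = \kappa + |E|^2/2$ cancels the source terms $\pm J^{n+1/2} E^{n+1/2}$ and produces \eqref{eqn:local_e_conservation_2nd_order}.

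The one point demanding care --- and the only real obstacle --- is the alignment that drives this cancellation: the current $J^{n+1/2} = c_{10} f_0^{n+1/2} + c_{11} f_1^{n+1/2}$ fed into the \Ampere\ solve must be the very same current appearing as the momentum-exchange source in the kinetic-energy balance, and the field advancing the moments in step 9 must be exactly the time-centered $E^{n+1/2}$ appearing in the field-energy increment. Both hold by construction in steps 2 and 9, so the cancellation is exact regardless of how the low-rank factors were advanced in steps 3--8; those stages affect only $g^{n+1/2}$, and hence the energy \emph{flux} $\left\langle v^3/2, f^{n+1/2}\right\rangle_v$, not the balance itself. I therefore expect the proof to reduce to the two first-order computations evaluated at midpoint fluxes, plus the one-line field-energy cancellation.
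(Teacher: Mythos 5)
Your proposal is correct and follows essentially the same route as the paper's own proof: establish $\left\langle g,\bm{\phi}(v)\right\rangle_v = 0$ via the QR orthogonalization in \eqref{eqn:V_jnplus1_QR}, rerun the Theorem~\ref{thm:local-conservation} computations on the step-9 updates with fluxes and sources at $t^{n+1/2}$, and cancel $J^{n+1/2}E^{n+1/2}$ against the field-energy increment from the \Ampere\ solve. Your explicit remark about the alignment of $J^{n+1/2}$ and $E^{n+1/2}$ between steps 2 and 9 is left implicit in the paper but is exactly the right point to flag.
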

\begin{proof}
    By \eqref{eqn:V_jnplus1_QR}, $P_\Phi V_j^n = \mathbf{0}$, so $\left\langle g^n, \bm{\phi}(v) \right\rangle_v = 0$ for all times $n$.
    Therefore, the only contributions to the charge and total energy come from $f_0, f_1, f_2$ and $E$.
    From equations \eqref{eqn:f0-2nd-order-step} and \eqref{eqn:f2-2nd-order-step}, by
    following the proof of Theorem \ref{thm:local-conservation} with fluxes and source terms evaluated
    at $t^{n+1/2}$, we obtain
    \begin{align}
        &\frac{\rho^{n+1} - \rho^n}{\Delta t} + \partial_x J^{n+1/2} = 0, \\
        \label{eqn:kappa_conservation_2nd_order}
        &\frac{\kappa^{n+1} - \kappa^n}{\Delta t} + \partial_x \left\langle \frac{v^3}{2}, f^{n+1/2} \right\rangle_v = J^{n+1/2} E^{n+1/2}.
    \end{align}
    Combining \eqref{eqn:kappa_conservation_2nd_order} with \eqref{eqn:ampere-2nd-order} and the definition of $E^{n+1/2}$, we derive the stated total energy conservation law \eqref{eqn:local_e_conservation_2nd_order}.
\end{proof}

\begin{corollary}
    A fully discrete scheme for the second-order integrator of Section \ref{sec:second-order-integrator},
    which uses a conservative spatial discretization for the $\partial_x$ operators
    appearing in equations \eqref{eqn:step-f0-2}-\eqref{eqn:step-f2-2} and \eqref{eqn:f0-2nd-order-step}-\eqref{eqn:f2-2nd-order-step} will satisfy exact discrete
    charge and energy conservation.
\end{corollary}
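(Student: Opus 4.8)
The plan is to observe that the proof of Theorem~\ref{thm:conservation2} is entirely algebraic in the spatial variable: every appearance of $\partial_x$ enters as a \emph{linear} operator that is pulled outside the velocity moments $\langle \cdot \rangle_v$, and no identity particular to the continuous derivative (product rule, chain rule, integration by parts in $x$) is ever invoked. Consequently, replacing each $\partial_x$ in the net update formulas \eqref{eqn:f0-2nd-order-step}--\eqref{eqn:f2-2nd-order-step} by a conservative discrete derivative $D_x$ should leave the derivation intact. First I would record the structural fact underlying the whole argument: the QR construction \eqref{eqn:V_jnplus1_QR} enforces $P_\Phi V_j = 0$ at every basis update, so $\left\langle g, \bm{\phi}(v) \right\rangle_v = 0$ holds at all time levels and the discrete charge, current, and kinetic energy are carried entirely by $f_0, f_1, f_2$. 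This orthogonality is a property of the velocity discretization alone and is therefore untouched by the choice of spatial operator.

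Next I would transcribe the linear-combination argument of Theorem~\ref{thm:conservation2} with $D_x$ in place of $\partial_x$. Because $D_x$ acts only on the grid index while $\langle \cdot \rangle_v$ acts only on $v$, the two operations commute, so the relation $\bm{\phi} = C\mathbf{p}$ of \eqref{eqn:phi_p_relation_c}, the recurrence \eqref{eqn:p-recurrence}, and the derivative coefficients \eqref{eqn:p-derivatives} collapse the moment equations exactly as before, now yielding the fully discrete analogues of \eqref{eqn:local_rho_conservation_2nd_order} and \eqref{eqn:kappa_conservation_2nd_order},
\begin{align*}
\frac{\rho^{n+1} - \rho^n}{\Delta t} + D_x J^{n+1/2} = 0, \qquad
\frac{\kappa^{n+1} - \kappa^n}{\Delta t} + D_x \left\langle \frac{v^3}{2}, f^{n+1/2} \right\rangle_v = J^{n+1/2} E^{n+1/2}.
\end{align*}
The crucial point for energy is that the cancellation of the source $J^{n+1/2} E^{n+1/2}$ against the field-energy increment $\tfrac{|E^{n+1}|^2 - |E^n|^2}{2\Delta t}$ is \emph{pointwise} in $x$: it relies only on the discrete \Ampere update \eqref{eqn:ampere-2nd-order} and the definition $E^{n+1/2} = (E^{n+1}+E^n)/2$, neither of which contains a spatial derivative. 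Hence this cancellation survives discretization verbatim, giving the discrete total-energy balance $\tfrac{e^{n+1}-e^n}{\Delta t} + D_x \left\langle \tfrac{v^3}{2}, f^{n+1/2}\right\rangle_v = 0$, the discrete version of \eqref{eqn:local_e_conservation_2nd_order}.

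Finally I would invoke the defining property of a conservative spatial discretization: $D_x$ applied to any flux is a discrete divergence that telescopes under the discrete integral over the periodic domain $\Omega_x$, so its sum over the grid vanishes. Summing the two discrete source-balance laws over $\Omega_x$ then annihilates the flux terms and delivers exact conservation of total discrete charge and total discrete energy from $t^n$ to $t^{n+1}$. The only step requiring genuine care is the transcription in the second paragraph --- verifying that every manipulation in Theorem~\ref{thm:conservation2} is indeed a commutation of a purely spatial linear operator with a velocity moment, rather than a disguised use of a continuous-calculus identity. Since the argument never differentiates a product in $x$ and the field-energy cancellation is local, no real obstacle arises, and the result is essentially the continuous proof under the substitution $\partial_x \mapsto D_x$ followed by a summation over the periodic grid.
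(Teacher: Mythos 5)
Your proposal is correct and follows exactly the route the paper intends: the corollary is stated without proof as an immediate consequence of Theorem \ref{thm:conservation2}, relying precisely on the observations you make --- that the moment-collapse argument uses only linearity of $\partial_x$ and its commutation with $\left\langle \cdot \right\rangle_v$, that the cancellation of $J^{n+1/2}E^{n+1/2}$ against the field-energy increment is pointwise in $x$, and that a conservative flux-difference operator telescopes over the periodic grid. Your write-up simply makes explicit what the authors left implicit, and no gap remains.
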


\subsection{Substeps for low-rank factors}
\label{sec:low-rank-factors}

In this section we expand each of the low-rank factors' equation of motion, \eqref{eqn:K-eqn}, \eqref{eqn:S-eqn}, \eqref{eqn:L-eqn}.
Our purpose is to demonstrate that the proposed algorithm is efficient in the sense of not requiring
operations that have a computational cost of $\mathcal{O}(N_x N_v)$, where $N_x, N_v$ represent the degrees of freedom
used to discretize $x, v$ respectively.
To avoid proliferation of indices, in this section we consider only the case of simple Forward Euler steps
used in the first-order integrator.
The computational cost of the second-order integrator, whose substeps are themselves SSPRK2 steps,
differs by only a constant factor.

The terms on the right-hand side of the K and S steps can be simplified by the following observation.
\begin{prop}
    \label{prop:Vj_projection}
    Let the basis functions $V_j^n$ be evolved according to the first-order time integrator of Section \ref{sec:first-order-integrator}.
    Then, for all \( h(v) \in L^2(\Omega_v, w^{-1}(v)) \),
    the following identity holds for all \( V_j^n(v) \):
    \begin{align}
        \left\langle V_j^n(v), \ppp h(v) \right\rangle_{w^{-1}(v)} = \left\langle V_j^n(v), h(v) \right\rangle_{w^{-1}(v)}.
    \end{align}
\end{prop}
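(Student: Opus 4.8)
The plan is to exploit the self-adjointness of the orthogonal projection $P_\Phi$ (and hence of $\ppp = I - P_\Phi$) with respect to the inner product $\left\langle \cdot, \cdot \right\rangle_{w^{-1}(v)}$, together with the fact that each $V_j^n$ is orthogonal to the subspace $\Phi$. First I would recall from \eqref{eqn:V_jnplus1_QR} that the velocity basis is constructed precisely so that $P_\Phi V_j^n = 0$ for every $j$ and every time level $n$: the QR step prepends $w(v)\mathbf{p}(v)$, which spans $\Phi$, and orthogonalizes the $L_i^{n+1}$ against it, so the resulting $V_j^{n+1}$ lie in $\Phi^\perp$. Thus $V_j^n \in \Phi^\perp$ is the one structural fact I need.

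Next I would write out the left-hand side and move the projection onto the $V_j^n$ factor. Since $\ppp = I - P_\Phi$ is an orthogonal projection with respect to $\left\langle \cdot, \cdot \right\rangle_{w^{-1}(v)}$, it is self-adjoint for that inner product, so
\begin{align*}
\left\langle V_j^n, \ppp h \right\rangle_{w^{-1}(v)} = \left\langle \ppp V_j^n, h \right\rangle_{w^{-1}(v)}.
\end{align*}
Because $V_j^n \in \Phi^\perp$, we have $P_\Phi V_j^n = 0$ and therefore $\ppp V_j^n = V_j^n - P_\Phi V_j^n = V_j^n$. Substituting this into the displayed identity yields
\begin{align*}
\left\langle V_j^n, \ppp h \right\rangle_{w^{-1}(v)} = \left\langle V_j^n, h \right\rangle_{w^{-1}(v)},
\end{align*}
which is exactly the claim.

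The only point requiring care — and the closest thing to an obstacle — is justifying the self-adjointness of $P_\Phi$, but this is immediate from the explicit formula \eqref{eqn:P_phi_def}, in which $P_\Phi$ is written as a sum of rank-one orthogonal projections onto the orthonormal elements $w(v)p_n(v)$; any such projection onto a closed subspace is symmetric in its defining inner product. Alternatively, one can verify symmetry directly by expanding $\left\langle V_j^n, P_\Phi h \right\rangle_{w^{-1}(v)}$ and $\left\langle P_\Phi V_j^n, h \right\rangle_{w^{-1}(v)}$ using \eqref{eqn:P_phi_def} and the orthonormality \eqref{eqn:p-ortho}, and observing that both reduce to the same bilinear expression in the $\mathbf{p}$-moments of $V_j^n$ and $h$. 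I expect the whole argument to be short; the substance is entirely contained in the orthogonality $P_\Phi V_j^n = 0$ guaranteed by the construction \eqref{eqn:V_jnplus1_QR}, with self-adjointness of the projection doing the remaining bookkeeping.
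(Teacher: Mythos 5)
Your proof is correct and rests on the same essential fact as the paper's: that the QR step \eqref{eqn:V_jnplus1_QR} forces $V_j^n \perp \Phi$. The paper simply decomposes $h = P_\Phi h + \ppp h$ and notes $\left\langle V_j^n, P_\Phi h \right\rangle_{w^{-1}(v)} = 0$ directly, whereas you route the same observation through self-adjointness of $P_\Phi$; the two arguments are equivalent.
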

\begin{proof}
    Decompose \( h(v) \) as \( h(v) = P_\Phi h(v) + \ppp h(v) \), and use \( \left\langle V_j^n(v), P_\Phi h(v) \right\rangle_{w^{-1}(v)} = 0 \),
    since \( V_j^n \) is orthogonal to the range of \( P_\Phi \).
\end{proof}

\subsubsection*{Applying boundary conditions in \( v \)}
For a bounded velocity domain such as the finite difference discretization we will discuss below,
we must apply a boundary condition in $v$.
This is clear in the case of the L step, where the boundary condition is applied to a hyperbolic term.
It is also true for the K and S steps, where the boundary condition is required for the evaluation of a derivative
under an integral.
In \cite{huAdaptiveDynamicalLow2022} the authors show how to treat boundary conditions in \( x \) in the dynamical low-rank framework;
we use the same approach.
The idea is that the Dirichlet boundary condition \( g(x, v_b, t) = -\mathcal{N}(x, v_b, t) \) does not induce a boundary condition on the basis functions \( V_j \) directly.
Rather, by projecting onto \( X_i \), we can see that the boundary condition should be applied to the weighted function basis \( L_i \):
\begin{align*}
\left\langle X_i, g(x, v_b, t) \right\rangle_x = L_i(v_b, t) = \left\langle X_i, -\mathcal{N}(x, v_b, t) \right\rangle_x.
\end{align*}
Then, when expanding the low-rank equations of motion, we interpret terms involving \( \partial_v g \) as \( \sum_k X_k \partial_v L_k \), rather than the usual
\( \sum_{kl} X_k S_{kl} \partial_v V_l \), with the plan of applying the boundary condition on \( L_k \) in the course of evaluating the derivative.

Our assumption of periodic boundary conditions in \( x \) saves us the trouble of performing the same transformation to terms involving \( \partial_x g \);
however this poses no essential difficulty, and nontrivial nonperiodic boundary conditions in \( x \) are a straightforward extension of this scheme.

\subsubsection*{K step}
We may expand \eqref{eqn:K-step} by substituting \eqref{eqn:g-ansatz} into \eqref{eqn:K-eqn}.
Since the collisional moments $u$ and $T$ are held constant during the K step, at a time level $t^n$, we may write
\begin{align*}
Q(\mathcal{N}^n + g) = Q^n(\mathcal{N}^n + g) = Q^n(\mathcal{N}^n) + Q^n(g),
\end{align*}
where
\begin{align*}
Q^n(f) = \nu \partial_v (T^n \partial_v f + (v - u^n) f).
\end{align*}
After using Proposition \ref{prop:Vj_projection} to eliminate appearances of \( \ppp \),
this gives
\begin{align}
    \label{eqn:K-step-2}
\frac{K^{n+1}_j - K^n_j}{\Delta t} &= \left\langle V_j^n, D\left[E^*, \mathcal{N}^n, \sum_l K_l^n V_l^n \right] \right\rangle_\Lwinv \\
&= -\left\langle V_j^n, v \Dx \mathcal{N}^n + \qm E^* \Dv \mathcal{N}^n \right\rangle_\Lwinv + \left\langle V_j^n, Q^n(\mathcal{N}^n) \right\rangle_\Lwinv \nonumber \\
&\quad - \sum_{l} \left\langle V_j^n, v V_l^n \right\rangle_\Lwinv \Dx K^n_l  - \qm \sum_{k} X^n_k \left\langle V^n_j, \Dv L^n_k \right\rangle_\Lwinv E^* \\
&\quad + \nu \sum_{k} X^n_k \left[T^n \left\langle V^n_j, \Dv^2 L^n_k \right\rangle_\Lwinv + \left\langle V^n_j, \Dv (vL^n_k) \right\rangle_\Lwinv - u^n \left\langle V^n_j, \Dv L^n_k \right\rangle_\Lwinv \right], \nonumber
\end{align}
where \( L_i^n = \sum_j S_{ij}^n V_j^n \).
Derivative terms involving \( L_i^n \) are to be calculated using the projected boundary conditions on \( L \):
\begin{align*}
L^n_i(v_b) = \left\langle X_i^n, -\mathcal{N}^n(x, v_b) \right\rangle_x.
\end{align*}
Because some of the inner products appearing in \eqref{eqn:K-step-2} will also appear in the S step, we can save some computational effort by
computing and saving the vectors and matrices in \eqref{eqn:K-step-2}: 
\begin{align}
    \label{eqn:y1-def}
    \bm{y}^1_j = \left\langle V_j^n, v \Dx \mathcal{N}^n \right\rangle_\Lwinv, 
    \quad \bm{y}^2_j = \left\langle V_j^n, E^* \Dv \mathcal{N}^n \right\rangle_\Lwinv, 
    \quad \bm{y}^3_j(x) = \left\langle V_j^n, Q^n(\mathcal{N}^n) \right\rangle_\Lwinv
\end{align}
\begin{align}
    \label{eqn:Ajl-def}
    \bm{A}_{jl} = \left\langle V_j^n, v V_l^n \right\rangle_\Lwinv, 
    \quad \bm{D}^1_{jk} = \left\langle V^n_j, \partial_v L^n_k \right\rangle_\Lwinv
\end{align}
\begin{align}
    \label{eqn:D2-def}
    \bm{D}^2_{jk} = \left\langle V_j^n, \partial_v^2 L^n_k \right\rangle_\Lwinv, 
    \quad \bm{G}_{jk} = \left\langle V_j^n, \partial_v (v L_k^n) \right\rangle_\Lwinv
\end{align}
\textbf{Cost:} $\mathcal{O}(3r(N_v + N_x) + r^2N_v)$, by taking advantage of the rank-3 structure of $\mathcal{N}$.

Then the Forward Euler step for \( K \) can be written concisely as
\begin{align*}
    \frac{K_j^{n+1} - K_j^n}{\Delta t} &= -\bm{y}^1_j - \qm \bm{y}^2_j + \bm{y}^3_j - \sum_l \bm{A}_{jl} \partial_x K^n_l + \sum_k X^n_k \left[ -E^* \bm{D}^1_{jk} + \nu (T^n \bm{D}^2_{jk} + \bm{G}_{jk} - u^n \bm{D}^1_{jk}) \right].
\end{align*}
\textbf{Cost:} $\mathcal{O}(r^2N_x)$.
\begin{remark}
    Additional care must be taken in the implementation of the second-order integrator
    to ensure each of the intermediate vectors and matrices in \eqref{eqn:y1-def} is 
    defined and computed in terms of quantities with the correct time levels on the
    right-hand side.
\end{remark}

\subsubsection*{S step}
Because the $X_i$ basis has been updated in the K step, we must re-project the boundary conditions in \( v \) onto $X_i^{n+1}$.
Define \( \tilde{L}_i = \sum_j S^\prime_{ij} V^n_j \), then the projected boundary conditions are
\begin{align*}
\tilde{L}_i(v_b) = \left\langle X^{n+1}_i, -\mathcal{N}^n(x, v_b) \right\rangle_x.
\end{align*}
To expand \eqref{eqn:S-step}, substitute \eqref{eqn:g-ansatz} into \eqref{eqn:S-step} to obtain
\begin{align}
    \label{eqn:S-step-2}
    \frac{S^{\prime\prime}_{ij} - S^\prime_{ij}}{\Delta t} 
    &= -\left\langle X_i^{n+1} V_j^n, D\left[E^*, \mathcal{N}^n, \sum_{kl} X_k^{n+1} S_{kl}^\prime V_l^n \right] \right\rangle_\xLwinv \\
    &= \left\langle X^{n+1}_i V^n_j, v \Dx \mathcal{N}^n + 
    \qm E^* \Dv \mathcal{N}^n \right\rangle_\xLwinv - \left\langle X^{n+1}_i V^n_j, Q^n(\mathcal{N}^n) \right\rangle_\xLwinv \nonumber \\
&\quad + \sum_{kl} S_{kl}^\prime \left\langle X^{n+1}_i, \Dx X^{n+1}_k \right\rangle_x \left\langle V^n_j, v V^n_l \right\rangle_\Lwinv + 
\qm \sum_{k} \left\langle X^{n+1}_i, E^* X^{n+1}_k \right\rangle_x \left\langle V^n_j, \Dv \tilde{L}_k \right\rangle_\Lwinv \nonumber \\
&\quad - \nu \sum_k \left[ \left\langle X^{n+1}_i, T^n X^{n+1}_k \right\rangle_x \left\langle V^n_j, \Dv^2 \tilde{L}_k \right\rangle_\Lwinv \right. \\
&\qquad + \left\langle X^{n+1}_i, X^{n+1}_k \right\rangle_x \left\langle V^n_j, \Dv (v\tilde{L}_k) \right\rangle_\Lwinv \nonumber \\
&\qquad - \left. \left\langle X^{n+1}_i, u^n X^{n+1}_k \right\rangle_x \left\langle V^n_j, \Dv \tilde{L}_k \right\rangle_\Lwinv \right] \nonumber
\end{align}
We can save computational effort again by precomputing certain matrices which will appear in the L step.
First, compute
\begin{align*}
    \bm{\tilde{D}}^1_{jk} = \left\langle V^n_j, \partial_v \tilde{L}_k \right\rangle_\Lwinv
    \quad \bm{\tilde{D}}^2_{jk} = \left\langle V_j^n, \partial_v^2 \tilde{L}_k \right\rangle_\Lwinv, 
    \quad \bm{\tilde{G}}_{jk} = \left\langle V_j^n, \partial_v (v \tilde{L}_k) \right\rangle_\Lwinv.
\end{align*}
\textbf{Cost:} $\mathcal{O}(r^2 N_v)$.

Then take advantage of the rank-3 structure of $\mathcal{N}$ to compute the inner products in both \( x \) and \( v \):
\begin{align*}
    \bm{Z}^1_{ij} = \left\langle X_i^{n+1} V_j^n, v \Dx \mathcal{N}^n \right\rangle_\xLwinv, 
    \quad \bm{Z}^2_{ij} = \left\langle X_i^{n+1} V_j^n, E^* \Dv \mathcal{N}^n \right\rangle_\xLwinv, 
\end{align*}
\begin{align*}
    \bm{Z}^3_{ij} = \left\langle X_i^{n+1} V_j^n, Q^n(\mathcal{N}^n) \right\rangle_\xLwinv.
\end{align*}
\textbf{Cost:} $\mathcal{O}(3 r^2 (N_v + N_x))$.

Finally, compute the matrices from inner products in \( x \):
\begin{align*}
    \bm{B}_{ik} = \left\langle X_i^{n+1}, \partial_x X^{n+1}_k \right\rangle_x, 
    \quad \bm{F}_{ik} = \left\langle X^{n+1}, E^* X^{n+1}_k \right\rangle_x 
\end{align*}
\begin{align*}
    \bm{N}_{ik} = \left\langle X_i^{n+1}, T^n X^{n+1}_k \right\rangle_x, 
    \quad \delta_{ik} = \left\langle X^{n+1}, X^{n+1}_k \right\rangle_x, 
    \quad \bm{Q}_{ik} = \left\langle X^{n+1}, u^n X^{n+1}_k \right\rangle_x.
\end{align*}
\textbf{Cost:} $\mathcal{O}(r^2 N_x)$.

Now the Forward Euler step for \( S \) can be written concisely as
\begin{align*}
    \frac{S^{\prime\prime}_{ij} - S^\prime_{ij}}{\Delta t} = \bm{Z}^1_{ij} + \qm \bm{Z}^2_{ij} - \bm{Z}^3_{ij} + \sum_{kl} S_{kl}^\prime \bm{B}_{ik} \bm{A}_{jl}
    - \sum_k \left[ - \bm{F}_{ik} \tilde{\bm{D}}^1_{jk} + \nu(\bm{N}_{ik} \tilde{\bm{D}}^2_{jk} + \delta_{ik}\tilde{\bm{G}}_{jk} - \bm{Q}_{ik}\tilde{\bm{D}}^1_{jk}) \right]
\end{align*}
\textbf{Cost:} $\mathcal{O}(r^4)$.

\subsubsection*{L step}
The L step is calculated similarly to the K and S steps.
Because \( \ppp \) involves projecting only in \( v \), it commutes with the \( x \) inner product \( \left\langle \cdot \right\rangle_x \),
which lets us pull the projection \( \ppp \) out of each low-rank projection.
Recall that $\ppp = I - P_\Phi$ where $P_\Phi$ is defined in \eqref{eqn:P_phi_def}
Plug \eqref{eqn:g-ansatz} into \eqref{eqn:L-eqn} to obtain 
\begin{align}
    \label{eqn:L-step-expanded}
\frac{L^{n+1}_i - L_i^n}{\Delta t} 
&= \left\langle X_i^{n+1}, \ppp D\left[E^*, \mathcal{N}^n, \sum_k X^{n+1}_k L_k^n \right] \right\rangle_x \\
&= -\ppp \left\langle X^{n+1}_i, (v \Dx \mathcal{N}^n + \qm E^* \Dv \mathcal{N}^n) \right\rangle_x + \ppp \left\langle X^{n+1}_i, Q^n(\mathcal{N}^n) \right\rangle_x \nonumber \\
&\quad - \ppp \left( \sum_{k} v \left\langle X^{n+1}_i, \Dx X^{n+1}_k \right\rangle_x L^n_k + \sum_k \left\langle X^{n+1}_i, E^* X^{n+1}_k \right\rangle_x \Dv L^n_k \right) \\
&\quad + \ppp \nu \sum_k \left[ \left\langle X^{n+1}_i, T^n X^{n+1}_k \right\rangle_x \Dv^2 L^n_k + \left\langle X^{n+1}_i, X^{n+1}_k \right\rangle_x \Dv (vL^n_k) - \left\langle X^{n+1}_i, u^n X^{n+1}_k \right\rangle_x \Dv L^n_k \right]. \nonumber
\end{align}
Compute vectors appearing in \eqref{eqn:L-step-expanded}:
\begin{align*}
\bm{z}^1_i(v) = \left\langle X^{n+1}_i, v \Dx \mathcal{N}^n \right\rangle_x, 
\quad \bm{z}^2_i(v) = \left\langle X^{n+1}_i, E^* \Dv \mathcal{N}^n \right\rangle_x, 
\quad \bm{z}^3_i(v) = \left\langle X_i, Q(\mathcal{N}^n; U) \right\rangle_x.
\end{align*}
\textbf{Cost:} $\mathcal{O}(3r(N_v + N_x)$.

The Forward Euler step for \( L \) can be written concisely as
\begin{align}
    \label{eqn:L-step-3}
\frac{L^{n+1}_i - L^n_i}{\Delta t} &= \ppp (-\bm{z}^1_i(v) - \qm \bm{z}^2_i(v) + \bm{z}^3_i(v)) \\
& \quad + \ppp \left[ \sum_k (-v L^n_k \bm{B}_{ik} - \qm \bm{F}_{ik} \partial_v L^n_k + \nu (\bm{N}_{ik} \partial_v^2 L^n_k + \delta_{ik} \partial_v (v L^n_k) - \bm{Q}_{ik} \partial_v L^n_k )) \right].
\end{align}
\textbf{Cost:} $\mathcal{O}(r^2 N_v)$.

The total computational cost of all three substeps scales as $\mathcal{O}(r^2(N_x + N_v) + r^4)$.
The asymptotic advantages do not begin to tell in one dimension, but in 2D2V and 3D3V kinetic applications, we can easily have $N_x, N_v \gg r^2$,
in which case the efficiency gains from dynamical low-rank approximation are quite substantial.

\section{Spatial discretization}
\label{sec:spatial-discretization}
In this section we discuss the discretization of the equations of Section \ref{sec:time-discretization} in \( x \).
We will continue to leave the \( v \)-discretization unspecified for now, continuing our formulation in terms of continuous inner products in \( v \).
In $x$, we use a simple finite difference scheme on an equispaced grid, with grid separation denoted \( \Delta x \),
and points \( x_i \).
Functions are approximated directly on the grid, so that \( u(x_i) \approx u_i \).
For discretization of the inner product \( \left\langle \cdot \right\rangle_x \) over physical space,
we use the Trapezoidal rule.
On periodic domains, the Trapezoidal rule has spectral convergence, while for nonperiodic domains,
its second-order convergence is sufficient and matches the order of accuracy of our
finite difference discretization of the hyperbolic terms, described below.
The Poisson equation is solved using a standard centered finite difference stencil,
also of second order accuracy.
Finally, for non-hyperbolic first-order derivatives appearing inside of an inner product in \( x \), we use
a second-order centered finite difference approximation to \( \partial_x \).
Discussion of our hyperbolic finite difference discretization is below.

\subsection{Finite difference discretization of hyperbolic terms}
Because of the complexity of our time discretization, additional care is required to ensure that the overall hyperbolic structure of the kinetic equation is preserved in our discretization of \( x \) and \( v \).
In particular, four of the equations described in Section \ref{sec:time-discretization} have an advective form, involving the \( x \)-derivative of one or more unknowns.
Ignoring the other (source) terms in those equations, they are:
\begin{align}
    \label{eqn:first-3-rhs}
\partial_t \begin{pmatrix}
f_0 \\ f_1 \\ f_2
\end{pmatrix} + \begin{pmatrix}
b_0 & a_0 & 0 \\
a_0 & b_1 & a_1 \\
0 & a_1 & b_2
\end{pmatrix} \partial_x \begin{pmatrix}
f_0 \\ f_1 \\ f_2
\end{pmatrix} + \partial_x \begin{pmatrix}
0 \\ 0 \\ a_2 f_3
\end{pmatrix} = RHS,
\end{align}
and
\begin{align}
    \label{eqn:k-rhs}
\partial_t K_j + \left\langle V_j, v \partial_x \mathcal{N} \right\rangle_\Lwinv + \sum_l \bm{A}_{jl} \partial_x K_l = RHS,
\end{align}
where $\bm{A}_{jl}$ is defined according to \eqref{eqn:Ajl-def}.

\begin{prop}
    Equations \eqref{eqn:first-3-rhs} and \eqref{eqn:k-rhs} together form a globally hyperbolic system of
    partial differential equations.  
\end{prop}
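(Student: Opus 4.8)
The plan is to assemble the combined flux Jacobian of the coupled system in the ordered vector of unknowns $(f_0, f_1, f_2, K_1, \dots, K_r)^T$ and to show that it is real symmetric; since a real symmetric matrix is orthogonally diagonalizable with real eigenvalues, global hyperbolicity follows at once. Note first that the coefficient matrices $V$ and $\bm{A} = (\bm{A}_{jl})$ are frozen at the current time level and are independent of the local state, and the basis functions $V_j = V_j(v,t)$ do not depend on $x$; hence the coupled system is linear with a Jacobian that is constant in $x$, and it suffices to analyze that single constant matrix.

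The first step is to make the two coupling mechanisms explicit. The first-3 block in \eqref{eqn:first-3-rhs} couples to the $K$ block only through $f_3$ in the $f_2$ equation. Writing $g = \sum_j K_j V_j$ and using \eqref{eqn:f-3}, we have $f_3 = \sum_l \langle p_3, V_l\rangle_v \, K_l$, so that $\Dx(a_2 f_3) = a_2 \sum_l \langle p_3, V_l\rangle_v \, \Dx K_l$. Setting $\beta_l = \langle p_3, V_l\rangle_v$, this places the entries $a_2 \beta_l$ in the $f_2$ row of the Jacobian, in the columns indexed by $K_l$.

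The crux of the proof is the reverse coupling, namely the term $\langle V_j, v \Dx \mathcal{N}\rangle_\Lwinv$ in \eqref{eqn:k-rhs}, which I claim reduces to exactly $a_2 \beta_j \, \Dx f_2$. Expanding $\mathcal{N} = w(v)[p_0 f_0 + p_1 f_1 + p_2 f_2]$ and cancelling the weight gives $\langle V_j, v \Dx \mathcal{N}\rangle_\Lwinv = \sum_{n=0}^2 \langle V_j, v p_n\rangle_v \, \Dx f_n$. Because $P_\Phi V_j = 0$, the basis function $V_j$ satisfies $\langle V_j, p_m\rangle_v = 0$ for $m \in \{0,1,2\}$; combining this with the three-term recurrence \eqref{eqn:p-recurrence} makes $\langle V_j, v p_0\rangle_v$ and $\langle V_j, v p_1\rangle_v$ vanish, while $\langle V_j, v p_2\rangle_v = a_2 \langle V_j, p_3\rangle_v = a_2 \beta_j$. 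Thus the $K_j$ equation acquires the single off-diagonal entry $a_2 \beta_j$ in its $f_2$ column, which is precisely the transpose of the coupling found above. This matching is the one step that could obstruct the argument, and it is exactly the orthogonality built into the macro-micro decomposition that forces it.

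It remains to read off symmetry of the assembled Jacobian. The diagonal blocks are $V$, symmetric tridiagonal by construction, and $\bm{A}$, which is symmetric because $\bm{A}_{jl} = \langle V_j, v V_l\rangle_\Lwinv$ and multiplication by $v$ is self-adjoint on $L^2(\Omega_v, w^{-1})$, so that $\bm{A}_{jl} = \bm{A}_{lj}$. The off-diagonal blocks are the column $a_2(\beta_1, \dots, \beta_r)^T$ contributed by the $f_2$ equation and the matching row $a_2(\beta_1, \dots, \beta_r)$ contributed by the $K_j$ equations, which are transposes of one another. Hence the combined Jacobian is real symmetric, and therefore orthogonally diagonalizable with real eigenvalues, which establishes global hyperbolicity.
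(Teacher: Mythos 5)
Your proof is correct and follows essentially the same route as the paper: both arguments assemble the combined $(r+3)\times(r+3)$ quasilinear coefficient matrix and show it is real symmetric, with the key step being that the $f_2\to K_j$ coupling coefficients $a_2\langle p_3, V_j\rangle_v$ coincide with the $K_j\to f_2$ coefficients arising from $\langle V_j, v\,\Dx\mathcal{N}\rangle_\Lwinv$. Your explicit use of the three-term recurrence together with $P_\Phi V_j=0$ is just an unpacked version of the paper's appeal to Proposition \ref{prop:Vj_projection}, so nothing is missing.
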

\begin{proof}
Note that, by \eqref{eqn:p-recurrence}, 
\begin{align*}
\left\langle V_j, v \partial_x \mathcal{N} \right\rangle_\Lwinv = \left\langle V_j, a_2 \partial_x f_2(x, t) w(v) p_3(v) \right\rangle_\Lwinv = \left\langle V_j, w(v) p_3(v) \right\rangle_\Lwinv a_2 \partial_x f_2(x, t),
\end{align*}
where we have used Proposition \ref{prop:Vj_projection} to eliminate terms in the span of $w(v) \mathbf{p}(v)$ inside the inner product.
If we abbreviate the inner product by \( \left\langle V_j, w(v) p_3(v) \right\rangle_\Lwinv = \bm{\hat{y}}_j \),
we can write the combined system in quasilinear form like so:
\begin{align}
    \label{eqn:quasilinear}
\partial_t \begin{pmatrix}
f_0 \\ f_1 \\ f_2 \\ K_1 \\ K_2 \\ \vdots \\ K_r
\end{pmatrix} + 
\underbrace{\begin{pmatrix}
b_0 & a_0 \\
a_0 & b_1 & a_1 \\
& a_1 & b_2 & a_2 q_1 & a_2 q_2 & \dots & a_2 q_r\\
& & a_2 \hat{\bm{y}}_1 & \multicolumn{4}{c}{\multirow{4}{*}{\Large$\bm{A}_{jl}$\normalsize}} \\
& & a_2 \hat{\bm{y}}_2 \\
& & \vdots \\
& & a_2 \hat{\bm{y}}_r
\end{pmatrix}}_{\bm{A}_0} \partial_x \begin{pmatrix}
f_0 \\ f_1 \\ f_2 \\ K_1 \\ K_2 \\ \vdots \\ K_r
\end{pmatrix} = RHS.
\end{align}
Here we have rewritten \( f_3(x) \) as a linear combination of the \( K_j(x) \), with coefficients \( q_j \).
Per \eqref{eqn:f-3}, we have \( q_j = \left\langle p_3(v), V_j \right\rangle_v \).
But then
\begin{align*}
\hat{\bm{y}}_j = \left\langle V_j, w(v) p_3(v) \right\rangle_\Lwinv = \left\langle V_j, p_3(v) \right\rangle_v = q_j.
\end{align*}
Additionally, it is clear from its definition that \( \bm{A}_{jl} \) is symmetric.
Therefore, the matrix \( \bm{A}_0 \) is symmetric, and thus is strictly hyperbolic with real eigenvalues.
\end{proof}
The hyperbolicity of the overall system gives us confidence in applying standard discretization techniques for
hyperbolic conservation laws.
As a spatial discretization, we opt for a Shu-Osher conservative finite difference discretization based on upwind flux splitting.
The hyperbolic system is always upwinded ``all at once'', despite the fact that we solve the first three and the trailing $r$ equations separately.

To be precise, fix $V_j$ and let $R \Lambda R^{-1} = \bm{A}_0$ be the eigendecomposition of $\bm{A}_0(V_j)$.
Denote by $\bm{w}$ the length-$r+3$ vector $\bm{w} = [f_0, f_1, f_2, K_1, \dots, K_r]^T$.
Let $P_1$ and $P_2$ be the projection matrices consisting of the first three and trailing $r$ rows of an $(r + 3) \times (r+3)$ identity matrix; they select the rows of $\bm{w}$ corresponding to $f_0, f_1, f_2$ and $K_j$ respectively:
\begin{align}
    P_1 \bm{w} = [f_0, f_1, f_2]^T, \quad P_2 \bm{w} = [K_1, K_2, \dots, K_r]^T.
\end{align}
The flux terms appearing in \eqref{eqn:first-3-rhs} can be written as
\begin{align}
\begin{pmatrix}
b_0 & a_0 & 0 \\
a_0 & b_1 & a_1 \\
0 & a_1 & b_2
\end{pmatrix} \partial_x \begin{pmatrix}
f_0 \\ f_1 \\ f_2
\end{pmatrix} + \partial_x \begin{pmatrix}
0 \\ 0 \\ a_2 f_3
\end{pmatrix}
=
P_1 \partial_x \bm{A}_0 \bm{w},
\end{align}
and the flux terms in \eqref{eqn:k-rhs} as
\begin{align}
    \left\langle V_j, v \partial_x \mathcal{N} \right\rangle_\Lwinv + \sum_l \bm{A}_{jl} \partial_x K_l = P_2 \partial_x \bm{A}_0 \bm{w}.
\end{align}

The combined flux of $f_0, f_1, f_2,$ and $K_j$ is $F(\bm{w}) = \bm{A}_0 \bm{w}$.
Notionally, we approximate $\partial_x F(\bm{w})$ by a conservative flux difference:
\begin{align*}
\partial_x F(\bm{w}^n) = \frac{1}{\Delta x} (\hat{F}_{i+1/2} - \hat{F}_{i-1/2}),
\end{align*}
where $\hat{F}_{i+1/2}$ is the numerical flux between cells $i$ and $i+1$.
The numerical flux is split and upwinded according to the eigendecomposition of $\bm{A}_0$.
For example, a first-order upwind finite difference scheme would use
\begin{align}
    \label{eqn:upwind_fd_split}
    \hat{F}_{i+1/2} = \hat{F}^-_{i+1/2} + \hat{F}^+_{i+1/2} = R \Lambda^- R^{-1} \bm{w}_{i+1} + R \Lambda^+ R^{-1} \bm{w}_i.
\end{align}
In this work we use higher-order spatial reconstructions of the split flux, following
the conservative finite difference framework of \cite{shuEssentiallyNonoscillatoryWeighted1998}.
Except where noted, we use a MUSCL reconstruction with the Monotonized-Central limiter which is second-order in space.
We then approximate the hyperbolic terms in \eqref{eqn:first-3-rhs} and \eqref{eqn:k-rhs} as
\begin{align}
    P_s \partial_x F(\bm{w}) =
    P_s \frac{1}{\Delta x} \left( \hat{F}_{i+1/2} - \hat{F}_{i-1/2} \right)
\end{align}
for $P_s \in \{ P_1, P_2 \}$.
Importantly, we apply the projection $P_s$ \emph{after} the flux splitting and upwind reconstruction.
We base the upwinding procedure on the whole size $r+3$ eigenvector decomposition of $\bm{w}$, rather than
upwinding systems of size $3$ and size $r$ separately.

\section{Two velocity space discretizations}
\label{sec:velocity-discretization}

One of the benefits of our method is that it is generic over the choice of
velocity space discretization.
To illustrate this, we present a pair of velocity space discretizations based on orthogonal 
polynomial expansions, for which the polynomials $p_0, p_1, p_2$ appear as the first three
polynomials in one of the classical orthogonal polynomial families.

The first discretization we will discuss is a global Hermite spectral method, for which $p_0, p_1, p_2$ naturally
enter as the first three Hermite polynomials.
This method lets us discretize an unbounded velocity domain without arbitrary truncation.
The second discretization uses a truncated velocity space, with $p_0, p_1, p_2$ chosen as the first three scaled Legendre polynomials.
The resulting equations for the non-conserved part $g$ are solved with a standard upwind finite difference method.
Our aim with this pair of discretizations is to demonstrate the flexibility of the underlying macro-micro decomposition, which may be combined with whatever velocity space discretization is most convenient for the problem at hand.

\subsection{Asymmetrically-weighted Hermite spectral method}
This section derives a global spectral expansion for the low-rank component \( g \) in terms of the asymmetrically weighted, normalized Hermite polynomials.
Expansions in terms of Hermite polynomials enjoy a long history in numerical methods for kinetic equations.
An important distinction is that between ``centered'' Hermite expansions, which expand in polynomials which are orthogonal \emph{with respect to the local Maxwellian}, and ``uncentered'' expansions.
The latter use a fixed Gaussian distribution as the weight function for the polynomial family, and pay for their greatly increased simplicity with less rapid convergence.
Perhaps the most famous example of a centered Hermite expansion is Grad's moment method \cite{gradKineticTheoryRarefied1949}.
Our method is an uncentered method, of which many examples have been developed in recent years.
The use of an uncentered global Hermite expansion in velocity space may be coupled with a choice of physical space discretization; we highlight examples using Discontinuous Galerkin \cite{koshkarovMultidimensionalHermitediscontinuousGalerkin2021, filbetConservativeDiscontinuousGalerkin2022}
and Fourier spectral \cite{delzannoMultidimensionalFullyimplicitSpectral2015, vencelsSpectralPlasmaSolverSpectralCode2016} methods in \( x \).
As described in Section \ref{sec:spatial-discretization}, in this work we use a flux-limited high-resolution conservative finite difference scheme in $x$.

In terms of our notation, the Hermite polynomials are the orthogonal polynomial family defined on \( \Omega_v = \mathbb{R} \) with weight function
\begin{align*}
w(v) = \frac{1}{v_0 \sqrt{2\pi}} e^{-\frac{v^2}{2v_0^2}},
\end{align*}
where \( v_0 \) is a reference velocity which sets the width of the basis.
The Hermite polynomials satisfy the following orthogonality relation:
\begin{align*}
\int_{\Omega_v} w(v) He_n\left( \frac{v}{v_0} \right)He_m\left( \frac{v}{v_0} \right)\,\mathrm{d}v = \delta_{nm}.
\end{align*}
Their three-term recurrence relation has coefficients \( a_n = v_0 \sqrt{n+1} \) and \( b_n = 0 \) \cite{DLMF}.
The first several Hermite polynomials are
\begin{align*}
&p_0(v) = He_0 \left( \frac{v}{v_0} \right) = 1 
&&p_2(v) = He_2 \left( \frac{v}{v_0} \right) = \frac{(v/v_0)^2 - 1}{\sqrt{2}} \\
&p_1(v) = He_1 \left( \frac{v}{v_0} \right) = v/v_0
&&p_3(v) = He_3 \left( \frac{v}{v_0} \right) = \frac{(v/v_0)^3 - v/v_0}{\sqrt{6}}.
\end{align*}
From these definitions, it is simple to verify the identities
\begin{align*}
    v = v_0 p_1(v), \quad \frac{v^2}{2} = \frac{v_0^2}{2} (\sqrt{2} p_2(v) + 1),
\end{align*}
or in terms of the matrix $C$,
\begin{align*}
C = \begin{pmatrix}
    1 & 0 & 0 \\
    0 & v_0 & 0 \\
    \frac{v_0^2}{2} & 0 & \frac{v_0^2}{\sqrt{2}}
\end{pmatrix}.
\end{align*}
The derivative coefficients are
\begin{align*}
    d_{10} = \frac{1}{v_0}, \quad d_{20} = 0, \quad d_{21} = \frac{\sqrt{2}}{v_0}.
\end{align*}

We search for a solution \( f = \mathcal{N} + g \) where \( g \) is expanded in terms of the first $M+1$ Hermite polynomials,
\begin{align*}
g(x, v, t) = \sum_{n=0}^M w(v) He_n\left( \frac{v}{v_0} \right) g_n(x, t) = \bm{H}_M^T \bm{g}.
\end{align*}
We have expressed the sum in our preferred notation, which considers the sequence of asymmetrically weighted Hermite
polynomials as a vector which may be combined via a dot product with the vector of coefficients, \( \bm{g}(x, t) \).

The weighted inner product \( \left\langle \cdot \right\rangle_\Lwinv \) is discretized as the discrete
dot product of two coefficient vectors, as demonstrated by the following sequence of identities:
\begin{align*}
\int_{\Omega_v} w^{-1}(v) g(v) h(v) \,\mathrm{d}v &= \int_{\Omega_v} w^{-1}(v) \left(\sum_{n=0}^M w(v) He_n\left( \frac{v}{v_0} \right) g_n \right) \left( \sum_{m=0}^M w(v) He_m\left( \frac{v}{v_0} \right) h_m \right) \,\mathrm{d}v \\
&= \sum_{n=0}^M \sum_{m=0}^M g_n h_m \int_{\Omega_v} w(v) He_n\left( \frac{v}{v_0} \right) He_m\left( \frac{v}{v_0} \right)\,\mathrm{d} v \\
&= \sum_{n=0}^M \sum_{m=0}^M g_n h_m \delta_{nm} \\
&= \bm{g}^T \bm{h}.
\end{align*}
The required differentiation operators in the normalized Hermite basis are discretized by the following matrices,
which may be derived from the recurrence relations for the weighted Hermite polynomials \cite{DLMF}:
\begin{align*}
\partial_v (\bm{H}_M^T \bm{g}) = \frac{1}{v_0} \bm{H}_M^T \underbrace{\begin{pmatrix}
0 \\
-1 \\
& -\sqrt{2} \\
& & -\sqrt{3} \\
&  & & \ddots
\end{pmatrix}}_{\mathcal{D}} \bm{g}, \quad
\partial_v (v (\bm{H}_M^T \bm{g})) = \bm{H}_M^T \underbrace{\begin{pmatrix}
0 \\
0 & -1 \\
-\sqrt{2} & 0 & -2 \\
& -\sqrt{6} & 0 & -3 \\
& & \ddots & & \ddots
\end{pmatrix}}_{\mathcal{D}\Xi} \bm{g},
\end{align*}
\begin{align}
    \label{eqn:defn-Xi}
\partial_v^2 (\bm{H}_M^T \bm{g}) = \frac{1}{v_0^2} \bm{H}_M^T \underbrace{\begin{pmatrix}
0 \\
0 \\
\sqrt{2} \\
& \sqrt{6} \\
& & & \ddots
\end{pmatrix}}_{\mathcal{D}^2} \bm{g}, \quad
v (\bm{H}_M^T \bm{g}) = v_0 \bm{H}_M^T \underbrace{\begin{pmatrix}
0 & 1 \\
1 & 0 & \sqrt{2} \\
& \sqrt{2} & 0 & \sqrt{3} \\
& & \ddots & & & \ddots
\end{pmatrix}}_{\Xi} \bm{g}
\end{align}
Finally, the projection \( P_\Phi \) can be discretized as the operation which
discards all but the first three Hermite coefficients, and conversely \( \ppp \) as the
operation which sets the first three Hermite coefficients to zero:
\begin{align*}
P_\Phi f(v) = \bm{H}_M^T \begin{pmatrix}
f_0 \\ f_1 \\ f_2 \\ 0 \\ 0 \\ \vdots
\end{pmatrix}, \quad \ppp f(v) = \bm{H}_M^T \begin{pmatrix}
0 \\ 0 \\ 0 \\ f_3 \\ f_4 \\ \vdots
\end{pmatrix}
\end{align*}
The discrete operators defined above completely specify a velocity space discretization of
our scheme.

\subsubsection{Spectral filtering of Hermite modes}
In order to avoid numerical instability due to the Gibbs phenomenon, we apply a filter to the vector of Hermite
modes after each timestep.
Following \cite{filbetConservativeDiscontinuousGalerkin2022}, we employ the filter known as Hou-Li's filter \cite{houComputingNearlySingular2007} which prescribes multiplying the $m^{\text{th}}$ Hermite mode by a
scaling factor $\sigma \left( \frac{m}{M+1} \right)$, where
\begin{align}
    \sigma(s) = \begin{cases}
        1, & 0 \leq s \leq 2/3, \\
        e^{-\beta s^\beta} & s > 2/3,
    \end{cases}
\end{align}
with $\beta = 36$ designed to eliminate the final mode to within machine precision.
This filter is applied after each completed L step, i.e. step 5 in the first-order integrator and steps 5-6 in the second-order integrator.

\subsection{Truncated domain finite difference method with Legendre weight}
While global spectral methods for velocity space such as the Hermite method exhibit high accuracy
and are easy to implement, they are not the only option available, nor the best in all circumstances.
Grid-based methods, in particular Discontinuous Galerkin methods, are often preferred for
their better resolution of fine filamentation structures in velocity space 
\cite{hoPhysicsBasedAdaptivePlasmaModel2018, hakimAliasFreeMatrixFreeQuadratureFree2020}.
In particular, unlike Hermite spectral methods, grid-based methods do not suffer from 
degraded resolution when the drift velocity or temperature of the local solution differs too much from 
the ``reference'' velocity and temperature around which the polynomial basis is expanded.

To illustrate the flexibility of our scheme to accomodate a variety of velocity discretizations
including grid-based methods, 
in this section we describe a finite difference discretization of a truncated velocity space
with the macro-micro decomposition based on the Legendre polynomials $P_i(v)$.

Our truncated velocity domain is \( \Omega_v = [-v_{max}, v_{max}] \), with 
the constant weight function
\begin{align*}
w(v) = \frac{1}{v_{max}}.
\end{align*}
The Legendre polynomials scaled to \( \Omega_v \) satisfy the orthogonality relation
\begin{align*}
\int_{\Omega_v} w(v) P_n\left( \frac{v}{v_{max}} \right) P_m \left( \frac{v}{v_{max}} \right) \,\mathrm{d} v = \delta_{nm}.
\end{align*}
The recurrence relation for the orthonormal Legendre polynomials has coefficients
\begin{align*}
a_n = \frac{n+1}{\sqrt{(2n+1)(2n+3)}} v_{max}, \quad b_n = 0,
\end{align*}
and the first several examples are
\begin{align*}
&p_0(v) = P_0\left( \frac{v}{v_{max}} \right) = \sqrt{\frac{1}{2}} 
&&p_2(v) = P_2\left( \frac{v}{v_{max}} \right) = \sqrt{\frac{5}{8}} \left( 3\left(\frac{v}{v_{max}}\right)^2 - 1 \right) \\
&p_1(v) = P_1 \left( \frac{v}{v_{max}} \right) = \sqrt{\frac{3}{2}} \frac{v}{v_{max}}
&&p_3(v) = P_3\left( \frac{v}{v_{max}} \right) = \sqrt{\frac{7}{8}} \left( 5\left(\frac{v}{v_{max}}\right)^3 - 3\frac{v}{v_{max}} \right).
\end{align*}
From the first three Legendre polynomials it is easy to verify the identities
\begin{align*}
    v = \sqrt{\frac{2}{3}} v_{max} p_1(v), \quad \frac{v^2}{2} = \frac{v_{max}^2}{6} \left(\sqrt{\frac{8}{5}} p_2(v) + 1\right),
\end{align*}
or in terms of the matrix $C$,
\begin{align*}
C = \begin{pmatrix}
    \sqrt{2} & 0 & 0 \\
    0 & \sqrt{\frac{2}{3}} v_{max} & 0 \\
    \frac{v_{max}^2}{6} & 0 & \sqrt{\frac{8}{5}}\frac{v_{max}^2}{6}
\end{pmatrix}.
\end{align*}
The derivative coefficients $d_{10}, d_{20}, d_{21}$ are as follows:
\begin{align*}
    d_{10} = \frac{\sqrt{3}}{v_{max}}, \quad d_{20} = 0, \quad d_{21} = \frac{\sqrt{15}}{v_{max}}.
\end{align*}
We discretize velocity space using a finite difference discretization.
The hyperbolic term in the L step, equation \eqref{eqn:L-step-3}, is discretized using a piecewise-linear MUSCL reconstruction with
the monotonized-central (MC) slope limiter and a Lax-Friedrichs numerical flux.
Second-order derivatives in $v$ are discretized using a second-order centered finite difference operator.
Inner products in $v$ are computed using the midpoint rule.
Boundary conditions on $g$ are implemented via extrapolation of the Dirichlet boundary condition
$g(x, v_b, t) = -\mathcal{N}(x, v, t)$ into a ghost cell layer that is two cells wide.

\section{Numerical results}
\label{sec:numerical_results}

In this section we provide numerical results from standard benchmark problems in computational plasma physics.
All benchmarks are implemented in both the global Hermite and finite difference velocity discretizations.
For the Hermite discretization we use $v_0 = 1.0$ corresponding to a weight function $w(v) = \frac{1}{\sqrt{2\pi}}e^{-\frac{v^2}{2}}$.
For the finite difference discretization, we use $v_{max} = 8.0$.
Except where otherwise indicated, we use the second-order time integrator and neglect collisions, $\nu = 0.0$.

\subsection{Verification of second-order temporal accuracy}
To verify the claimed second-order accuracy of the splitting scheme described in Section \ref{sec:second-order-integrator},
we perform a convergence study using the weak Landau damping numerical test of Section \ref{sec:weak_landau_damping}.
This is solved using a fifth-order WENO finite difference discretization \cite{shuHighOrderWeighted2009} in space with $N_x=128$ grid points,
and the Hermite spectral discretization in velocity with $M=256$.
The first-order scheme is run with $\Delta t$ ranging from $\num{4e-3}$ to $\num{1.25e-4}$,
while the second-order scheme is run with $\Delta t$ from $\num{8e-3}$ to $\num{5e-4}$.
Convergence is observed by comparing the solution $f(\Delta t)$ with the refined solution
$f(\Delta t / 2)$ at time $t = 5.0$, and taking the $L^2$ norm of the difference.
The results are shown in Figure \ref{fig:convergence_study}.
We observe excellent agreement between the theoretical and empirical rates of convergence for both integrators.
\begin{figure}
    \centering
    \includegraphics[width=0.75\textwidth]{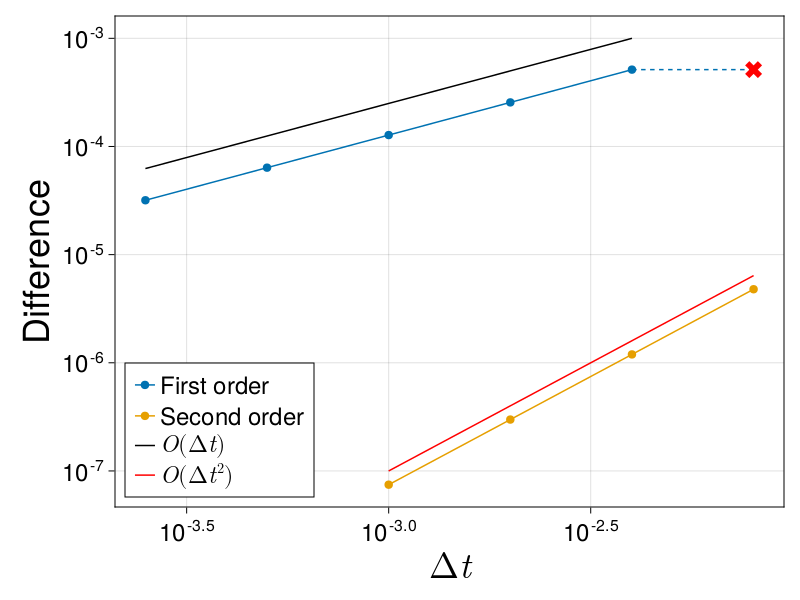}
    \caption{Convergence plot demonstrating first and second-order accuracy in time of the respective time integrators. The first-order integrator
        is unstable for $\Delta t = \num{8e-3}$.
    \label{fig:convergence_study}}
\end{figure}

\subsection{Weak Landau damping}
\label{sec:weak_landau_damping}
As a physics test, we reproduce the ubiquitous weak Landau damping benchmark problem with wavenumber \( k = 0.5 \)
using both the Hermite spectral discretization and the Legendre-weighted finite difference discretization
in velocity.

The initial condition is
\begin{align*}
f_0(x, v) = (1 + \delta \cos(k x)) e^{-v^2/2}, \quad x \in (0, 2\pi / k).
\end{align*}
The perturbation size is set to \( \delta = \num{1e-3} \).
The domain is discretized with \( N_x = 128 \) points in the \( x \) direction and with either \( M=256 \) Hermite modes or $N_v = 256$ velocity grid points.
The rank is set to \( r = 6 \).
We run the simulation with the second-order integrator to time \( t=40 \), using timesteps of \( \Delta t = \num{2e-3} \).
The result is shown in Figure \ref{fig:weak-LD}.
We measure a damping rate of \( \gamma = -0.1525 \) for the Hermite spectral discretization and $\gamma = -0.1523$ for the finite difference discretization of velocity space, demonstrating good agreement with the linear theory prediction of \( \gamma = -0.153 \).

\begin{figure}
    \begin{subfigure}[h]{\textwidth}
        \centering
        \includegraphics*[width=\textwidth]{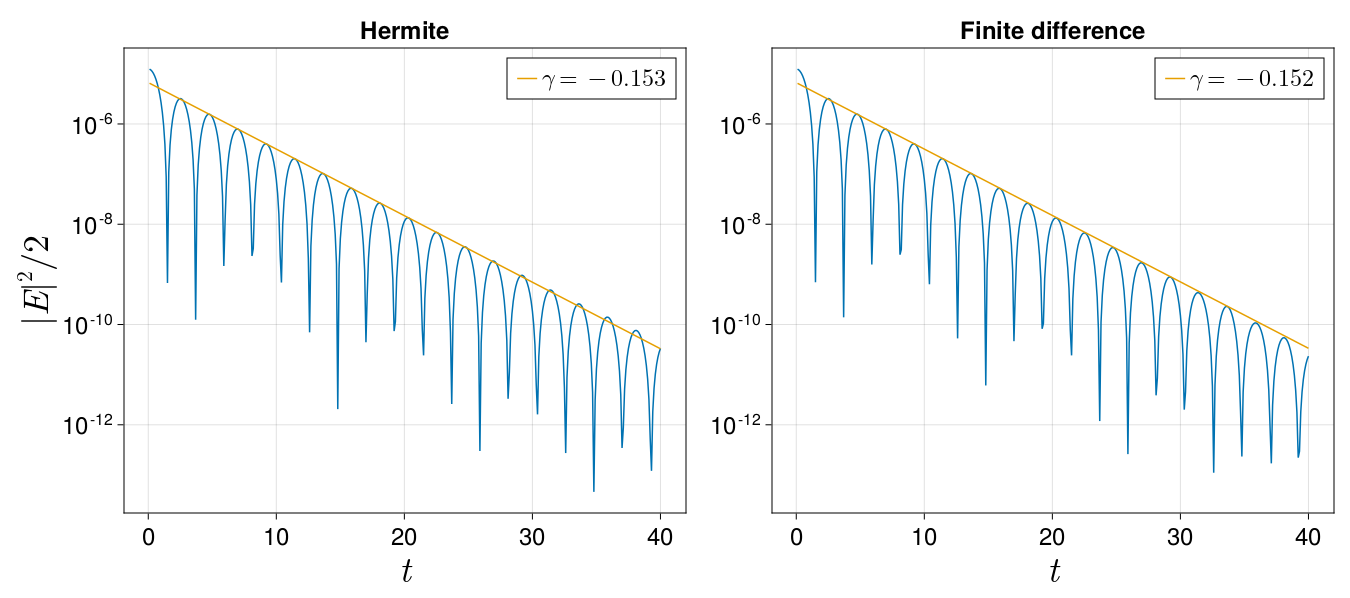}
        \caption{
            Electric energy and best linear fit.
        }
    \end{subfigure}
    \\
    \begin{subfigure}[h]{\textwidth}
        \centering
        \includegraphics*[width=\textwidth]{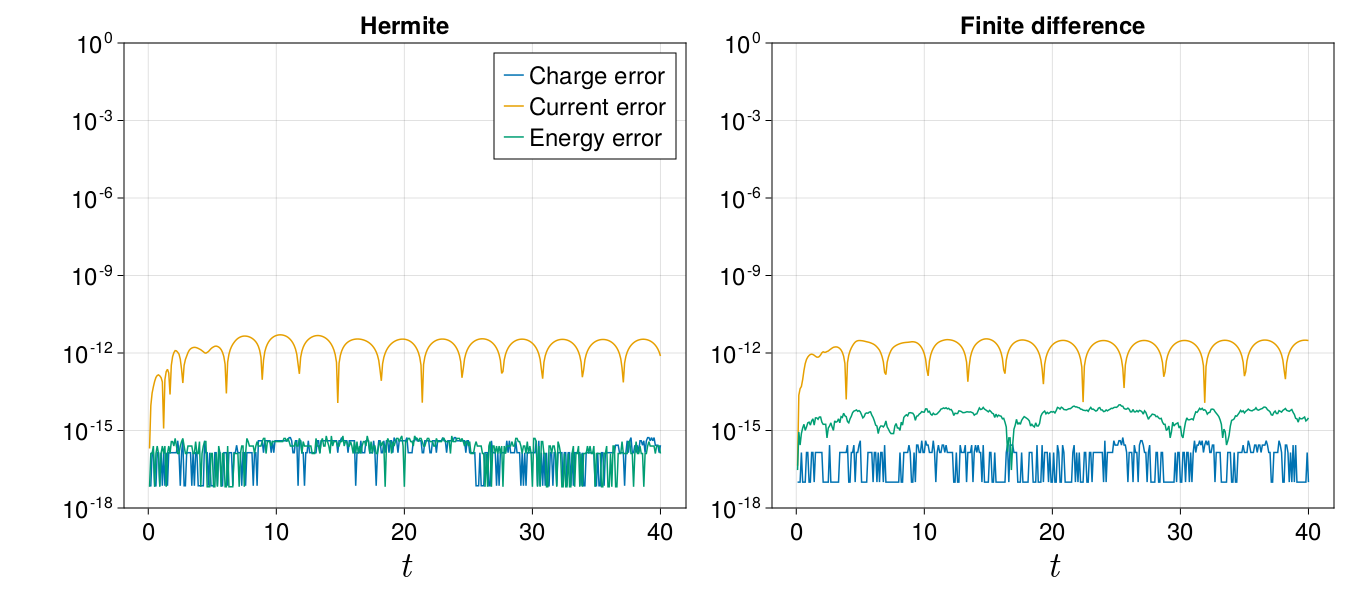}
        \caption{
            Charge, current and energy conservation error.
        }
    \end{subfigure}
    \caption{
        Weak Landau damping example, demonstrating exact conservation of charge and energy with the second-order
        time integrator.
            \label{fig:weak-LD}
    }
\end{figure}

\subsubsection*{Collisional Landau damping}
We validate the correctness of our solver including collisionality by comparing the Landau damping phenomenon
at a variety of collision frequencies $\nu$.
We solve the same weak Landau damping problem as above, but with the collision frequency \( \nu \) set to \( 0.0, 0.25, \) and \( 1.0 \).
The Hermite spectral solver is run with $M = 256$ and $\Delta t = \num{2e-3}$, the same as the collisionless example.
In contrast, the finite difference velocity discretization becomes more stiff as the diffusive collision
term grows larger, so for that discretization we reduce both the velocity grid spacing and timestep to $N_v = 128$
grid points and $\Delta t = \num{5e4}$.
The results are shown in Figure \ref{fig:collisional-LD}.
\begin{figure}
    \begin{subfigure}[h]{\textwidth}
        \centering
        \includegraphics[width=\textwidth]{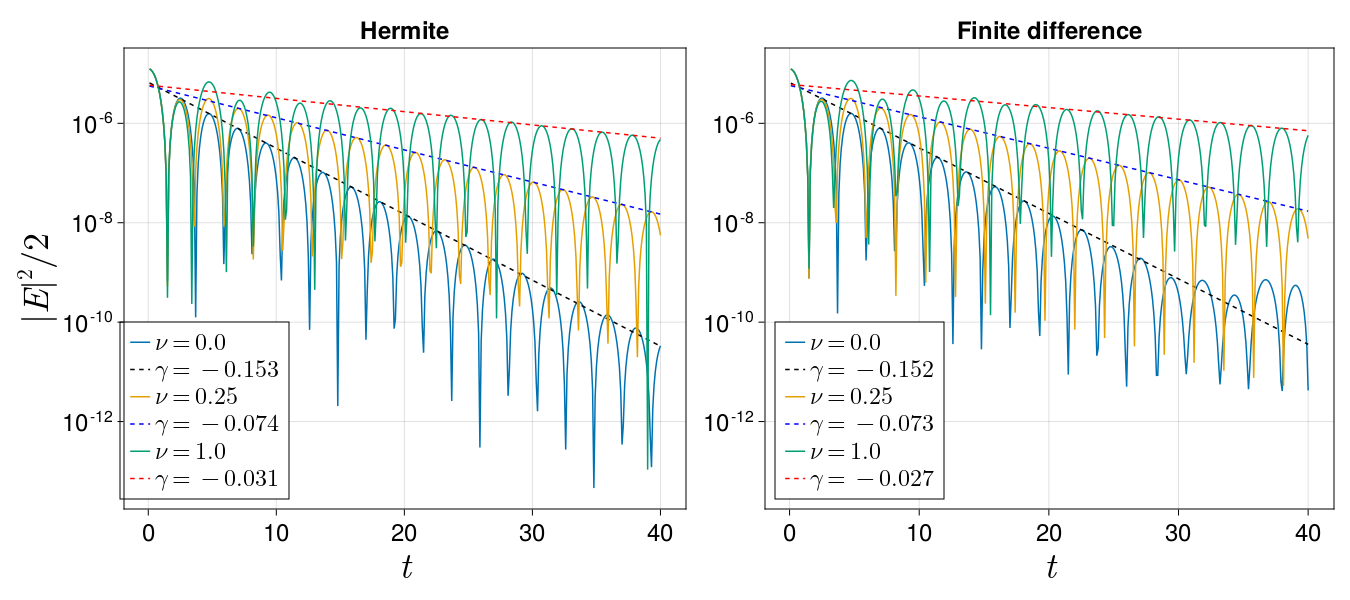}
        \caption{Measured damping rates as a function of collision frequency $\nu$. 
            Increasing collisionality brings the solution nearer to the fluid limit, which experiences no damping.
        Our results agree well with published results in \cite{hakimConservativeDiscontinuousGalerkin2020}.}
     \end{subfigure}
    \begin{subfigure}[h]{\textwidth}
        \centering
        \includegraphics[width=\textwidth]{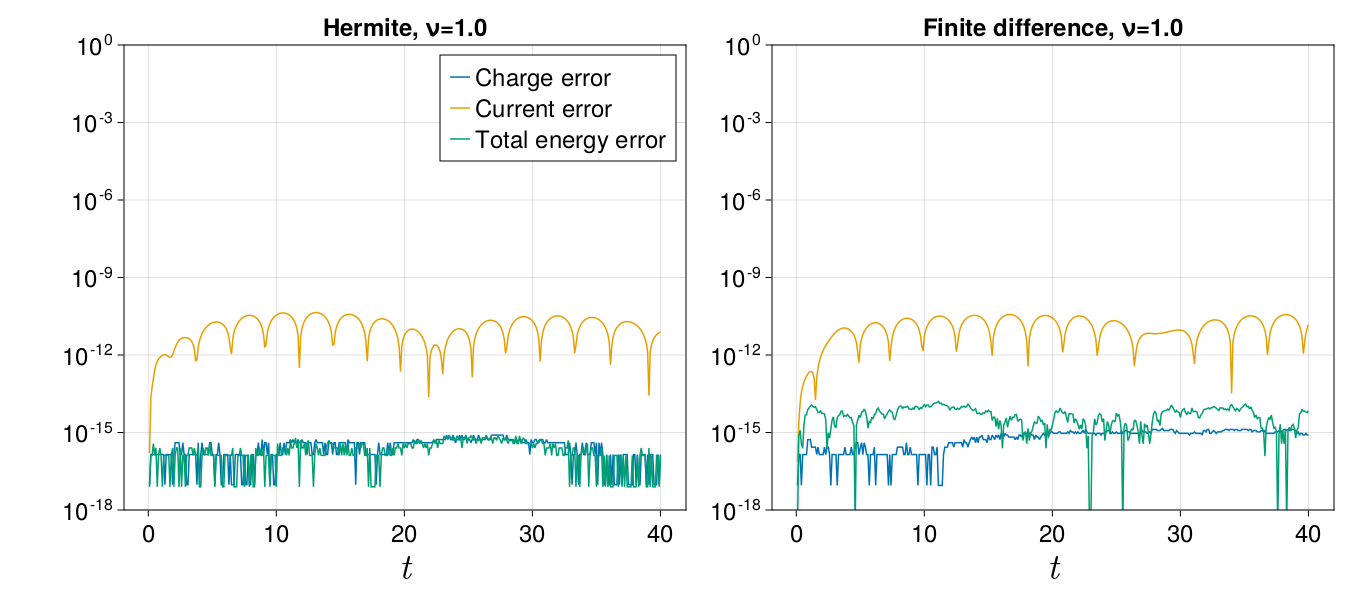}
        \caption{Conservation of charge and energy by the second-order integrator, tested in the $\nu=1.0$ case.}
     \end{subfigure}
     \caption{Collisional Landau damping example. \label{fig:collisional-LD}}
\end{figure}

\subsection{Strong Landau damping}
In this example we use the same wavenumber and domain as the weak Landau damping problem, \( k = 0.5 \), but set \( \delta = 0.5 \) to
explore the strong (nonlinear) Landau damping regime.
Again we present results from both the Hermite spectral and finite difference discretizations in velocity space.
The simulation is run with \( r = 16 \), on a grid with \( N_x=128 \) grid points in \( x \), and either \( M = 256 \) Hermite modes or $N_v=256$ velocity grid points.
The timestep is set to \( \Delta t = \num{4e-3} \), and the initial condition is evolved using the 
second-order energy-conserving integrator to \( t=50.0 \).
The results are shown in Figure \ref{fig:strong_landau_damping}, including the phase space density at $t = 25.0$.
Conservation properties of both first and second order integrators on this strong Landau damping problem are shown in Figure \ref{fig:sld_conservation}.

\begin{figure}
    \begin{subfigure}{\textwidth}
        \centering
        \includegraphics[width=\textwidth]{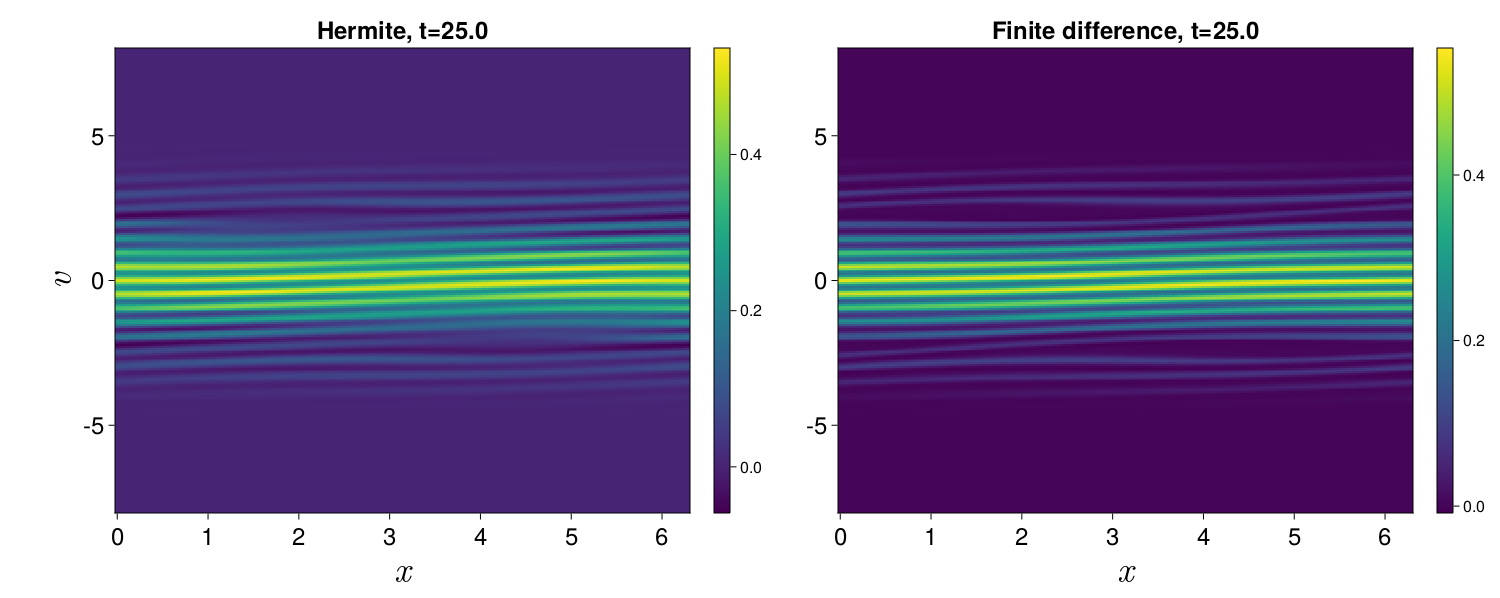}
        \caption{Phase space distribution $f(x, v, t=25.0)$ showing strong filamentation.
            The finite difference discretization is both less diffuse and shows less severe positivity violations than
            the Hermite discretization, which demonstrates the importance of compatibility with a diversity of
            velocity space discretizations.
        }
    \end{subfigure}
    \begin{subfigure}{\textwidth}
        \centering
        \includegraphics[width=\textwidth]{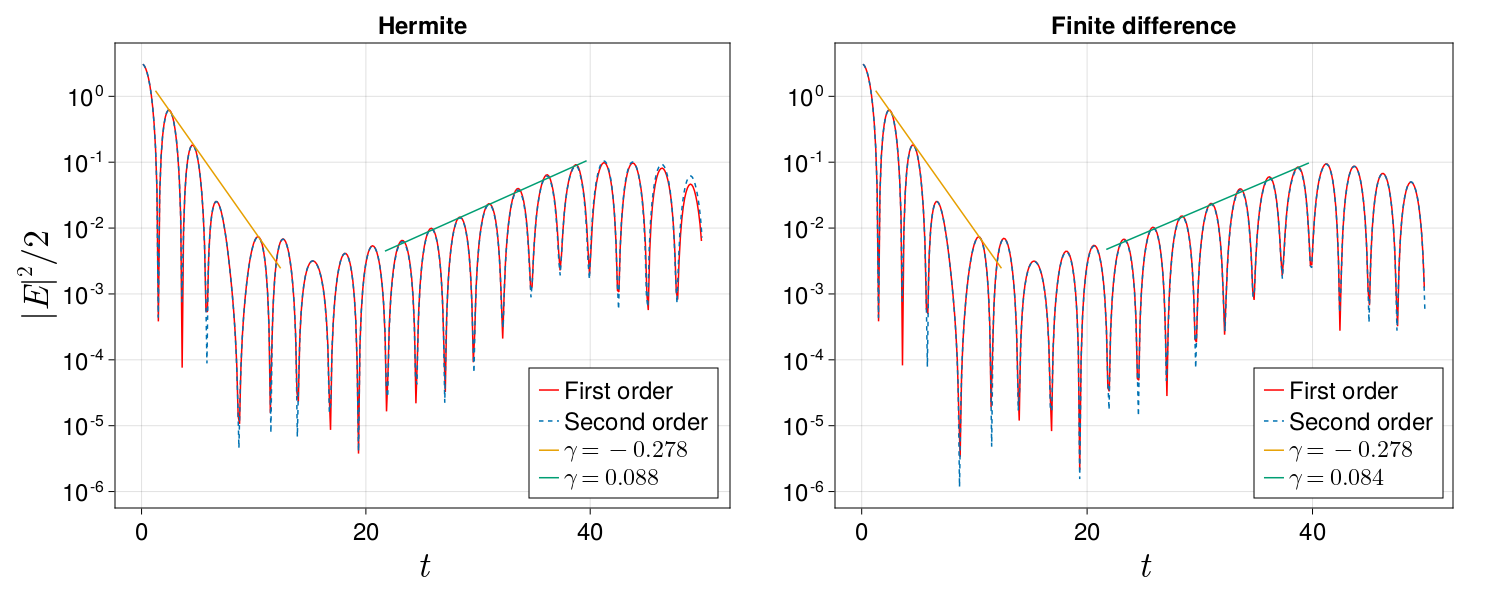}
        \caption{Electric energy traces. Our implementation matches both the initial nonlinear 
            damping and subsequent growth rates, compared to published results such as \cite{hoPhysicsBasedAdaptivePlasmaModel2018}.}
    \end{subfigure}
    \caption{Strong Landau damping example. \label{fig:strong_landau_damping}}
\end{figure}

\begin{figure}
    \begin{subfigure}{\textwidth}
        \centering
        \includegraphics[width=\textwidth]{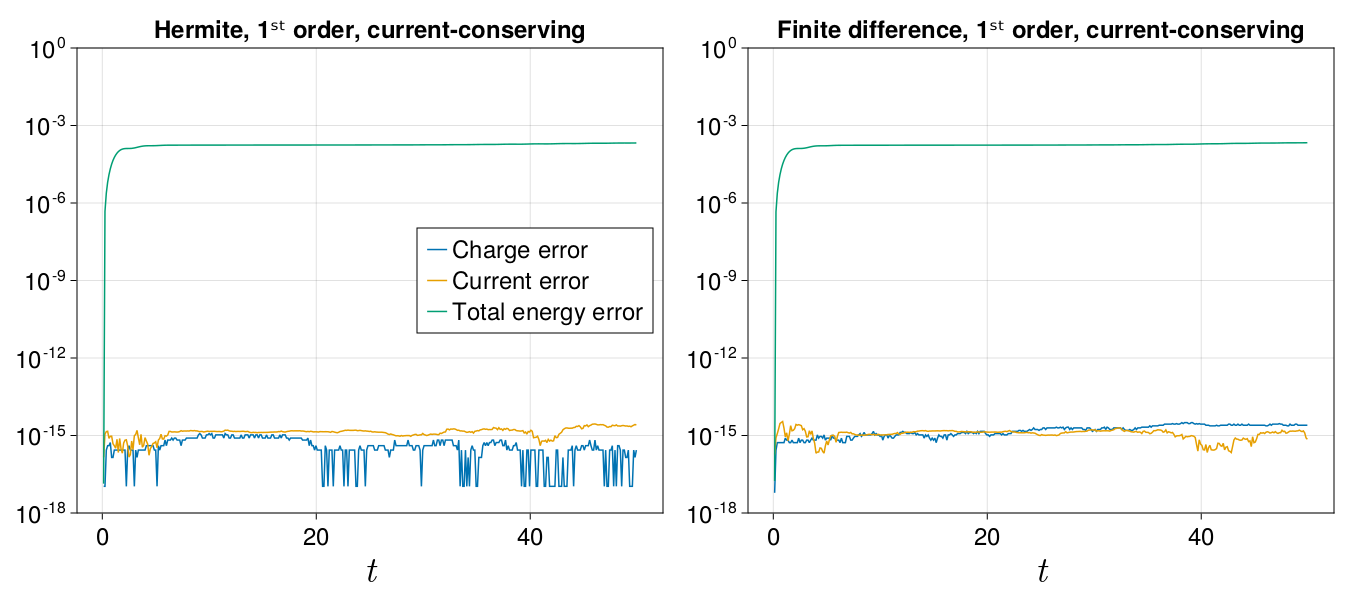}
        \caption{Exact conservation of charge and current by the first order integrator with a Poisson solve.}
    \end{subfigure}
    \begin{subfigure}{\textwidth}
        \centering
        \includegraphics[width=\textwidth]{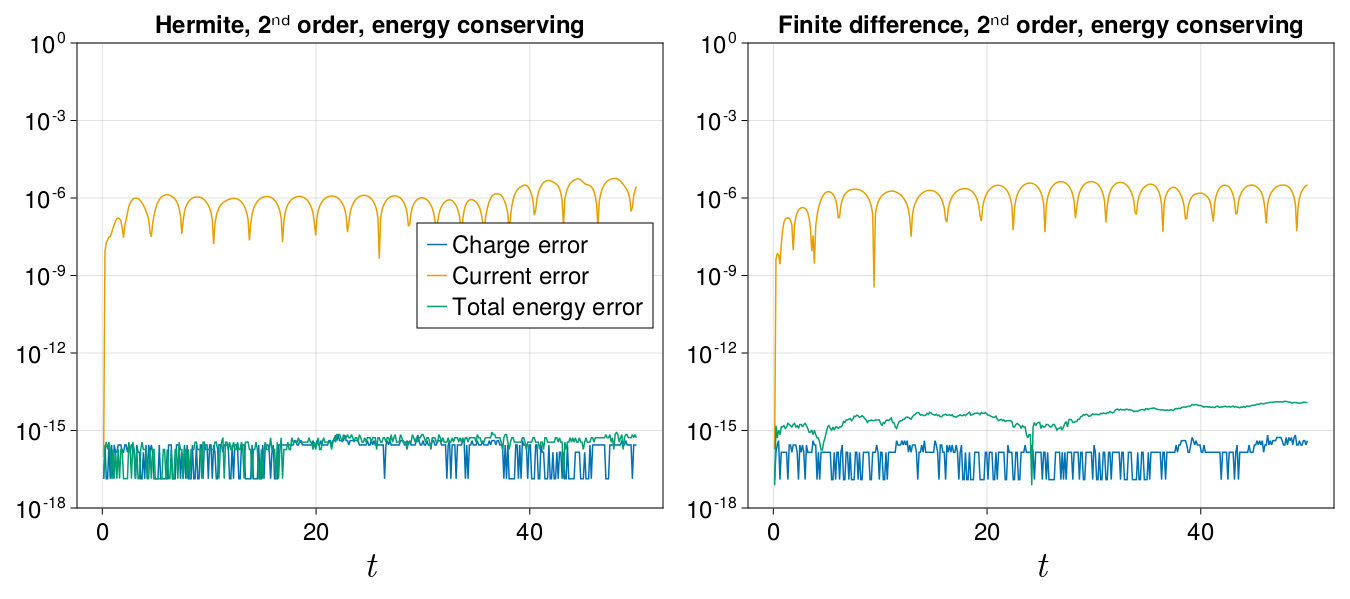}
        \caption{Exact conservation of charge and energy by the second order integrator with an \Ampere\ solve.}
    \end{subfigure}
    \caption{Conservation plots in the strong Landau damping example. We observe the benefit of overall second-order accuracy in the improved
    conservation of current in (b), compared to the conservation error of energy in (a). \label{fig:sld_conservation}}
\end{figure}

\subsection{Two-stream instability}
Here we reproduce the two-stream instability example from \cite{filbetConservativeDiscontinuousGalerkin2022}:
\begin{align*}
f(x, v, 0) = \frac{2}{7} (1 + 5v^2) (1 + \delta((\cos(2kx) + \cos(3kx)) / 1.2 + \cos(kx))) \frac{1}{\sqrt{2\pi}} e^{-v^2/2},
\end{align*}
with \( \delta = 0.01, k = 0.5 \).
This form of the distribution function is chosen to give the following analytic forms for the zeroth and second Hermite moments:
\begin{align*}
    f_0^{\text{Hermite}}(x, 0) = \frac{12}{7} (1 + \delta ((\cos(2kx) + \cos(3kx)) / 1.2 + \cos(kx))), \\
    f_2^{\text{Hermite}}(x, 0) = \frac{10\sqrt{2}}{7} (1 + \delta ((\cos(2kx) + \cos(3kx)) / 1.2 + \cos(kx))).
\end{align*}
We run this simulation on a grid with \( N_x = 256 \) grid points in \( x \).
For the Hermite spectral discretization we use \( M = 256 \) Hermite modes, and for the finite difference
discretization we use $N_v = 256$ velocity grid points.
The rank is set to \( r=20 \), and the instability is evolved with $\Delta t = \num{4e-3}$ well into the 
nonlinear phase, up to \( t = 50.0 \).
The results are shown in Figure \ref{fig:twostream}.
\begin{figure}
    \begin{subfigure}[h]{\textwidth}
        \centering
        \includegraphics[width=\textwidth]{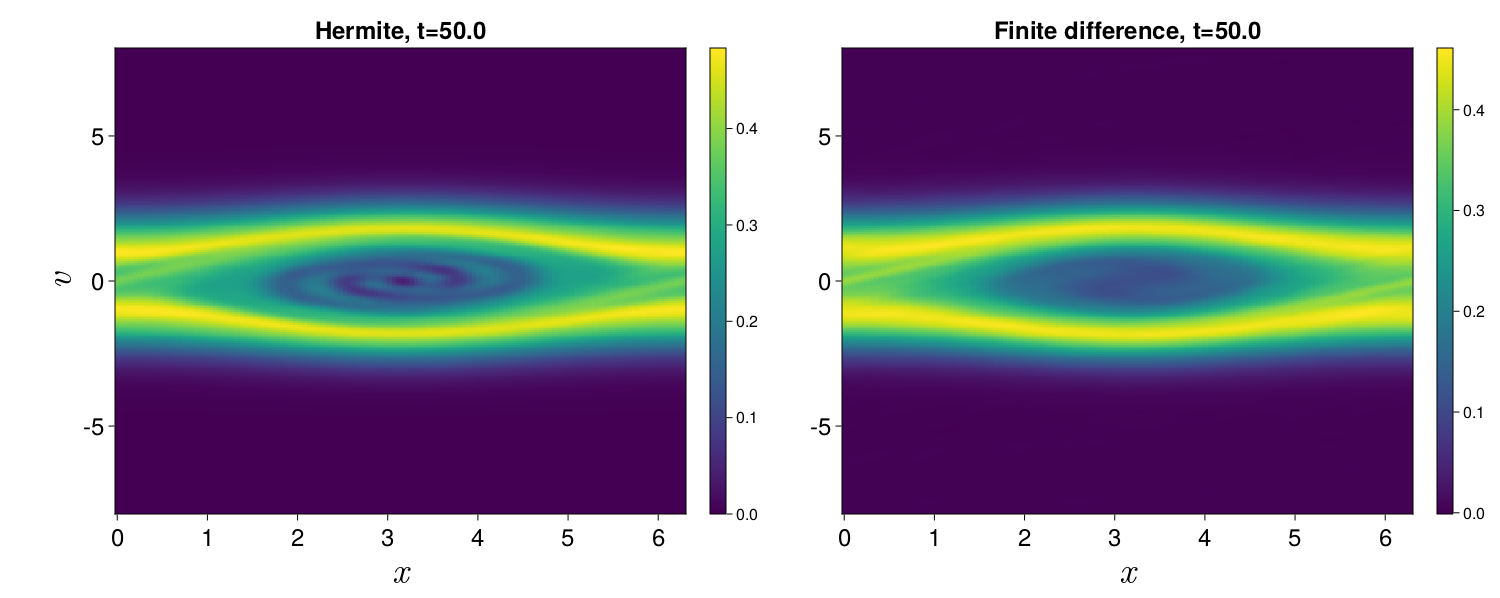}
        \caption{Plots of $f(x, v, t=50.0)$, depicting nonlinear phase space trapping in the two-stream instability.}
    \end{subfigure}
    \\
    \begin{subfigure}[h]{\textwidth}
        \centering
        \includegraphics[width=\textwidth]{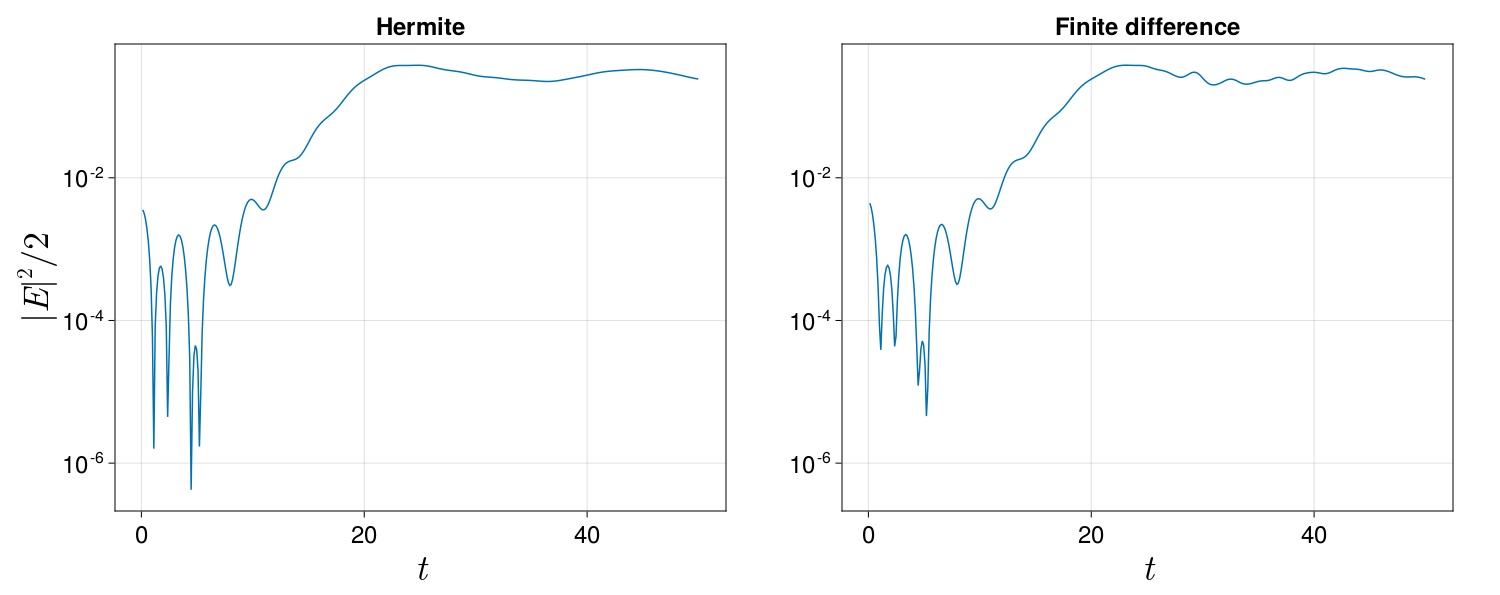}
        \caption{Electric energy trace. Our results show excellent agreement with published numerical results in \cite{filbetConservativeDiscontinuousGalerkin2022}.}
    \end{subfigure}
    \\
    \begin{subfigure}[h]{\textwidth}
        \centering
        \includegraphics[width=\textwidth]{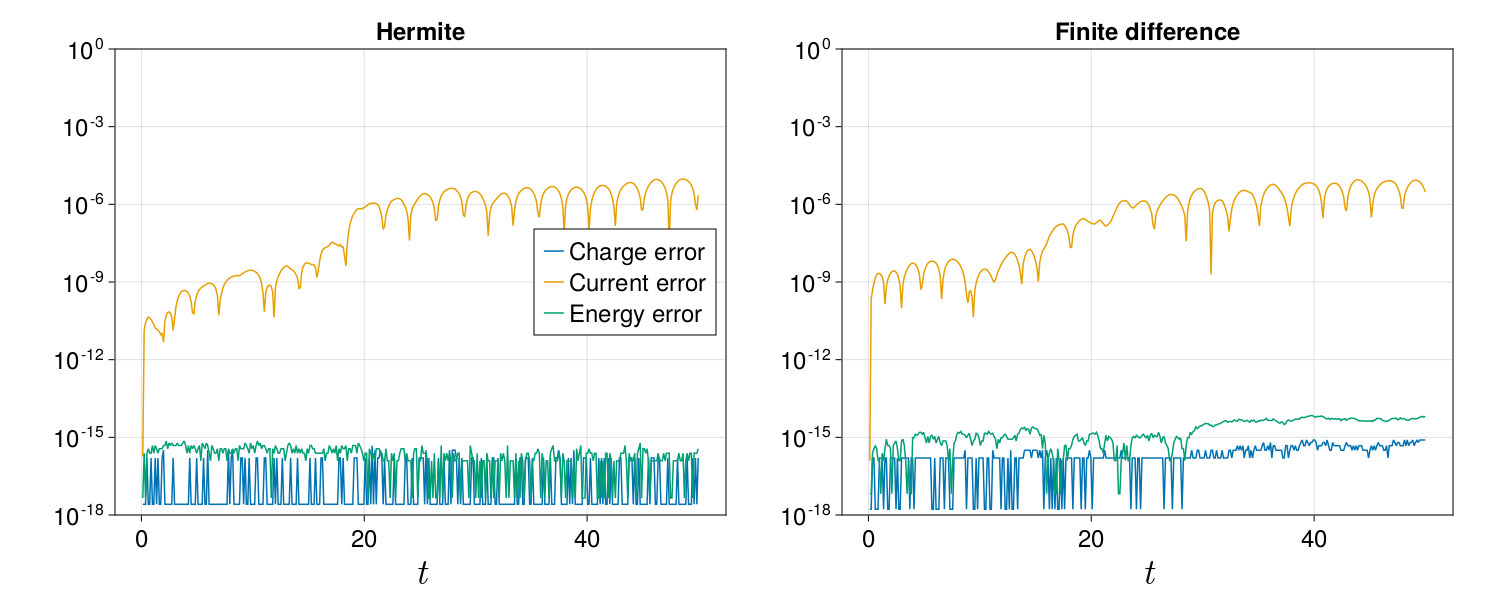}
        \caption{Exact conservation of charge and energy by the second order integrator.}
    \end{subfigure}
    \caption{Two-stream instability. \label{fig:twostream}}
\end{figure}

\section{Conclusion}
\label{sec:conclusion}
We have demonstrated a novel macro-micro decomposition which separates the particle distribution function $f$
into a rank-3 macroscopic portion which shares the moments of $f$, and a microscopic part which may be
evolved in the dynamical low-rank approximation framework.
This separation leads to a method which shares the efficiency benefits of the standard DLR approach while
preserving conservation of charge, current, and kinetic energy density.
Our macro-micro decomposition can be combined with appropriate temporal and spatial discretizations
to obtain schemes which exactly conserve charge and either current or total energy,
and exhibit second-order accuracy in time on our test problems.

To construct the decomposition, we use the orthogonal polynomial family corresponding to a weighted
inner product over velocity space to form an orthogonal projection which effectively separates the
macroscopic and microscopic portions of $f$.
Our approach has the benefit of supporting both infinite and truncated velocity domains.
Because the decomposition happens at the equation level, one can choose any discretization of velocity
space which is suitable for the application at hand.
To demonstrate this flexibility, we have implemented both a Hermite global spectral discretization
and a conservative finite difference discretization of velocity space.

As a proof of concept, we have implemented this scheme in one dimension, demonstrating the effectiveness
of the approach on standard plasma test problems.
We anticipate that extending the scheme to multiple dimensions should pose no essential difficulty,
since one can obtain a similar macro-micro decomposition based on tensor products of orthgonal polynomials.
Similarly, applying our scheme to the full Vlasov-Maxwell system would capture fully electromagnetic physics without
disproportionate complications.

\section*{Acknowledgements}
The authors would like to thank the anonymous reviewers for their feedback which greatly
improved the quality of this article.
We are especially indebted to the reviewer who discovered and helped us fix a hole in the
proofs of conservation.
The information, data, or work presented herein is based upon work supported by the 
National Science Foundation under Grant No. PHY-2108419. JH's research was also 
partially supported by AFOSR grant FA9550-21-1-0358 and DOE grant DE-SC0023164.

\bibliographystyle{plainnat}
\bibliography{references-bibtex.bib}

\end{document}